\documentclass[12pt,en]{elegantpaper}
\usepackage{esint}
\usepackage{extarrows}
\usepackage{mathrsfs}
\numberwithin{equation}{section}
\allowdisplaybreaks[4]
\everymath{\displaystyle}

\title{\textbf{Classification of Solutions to a Critical Fourth-Order\\ Equation on Complete Non-compact Manifolds}}

\author{Huabin Li \and Tian Wu \and Xiao Zhou}
\date{}

\begin{document}

\maketitle

\begin{abstract}
    \setlength{\abovedisplayskip}{2pt}
    \setlength{\belowdisplayskip}{2pt}
    \setlength{\abovedisplayshortskip}{0pt}
    \setlength{\belowdisplayshortskip}{0pt}
    \hspace{1em}
    In this paper, we study the critical biharmonic equation
    \[
    \Delta ^2 u=u^{\frac{n+4}{n-4}},
    \]
    on a complete, connected and non-compact Riemannian manifold \((M^n,g)\) of dimension \(n\geqslant5\) with non-negative Ricci curvature. We establish an optimal pointwise second-order derivative estimate by Bernstein's method and the continuity method. Using the method of invariant tensors, we derive a differential inequality that leads to a classification result: if there exists a positive finite energy solution, then the manifold is isometric to the Euclidean space \(\mathbb{R}^n\), and
    \begin{equation*}
        u(x)
        =
        \frac{
        \left[\lambda^2 n(n-4)(n^2-4)\right]^{\frac{n-4}{8}}
        }
        {\left(1+\lambda|x-x_0|^2\right)^{\frac{n-4}{2}}},
        \qquad
        \lambda>0,\quad x_0\in\mathbb{R}^n.
    \end{equation*}
    \keywords{the critical biharmonic equation, integrability, invariant tensors, rigidity.}
\end{abstract}

\tableofcontents
\section{Introduction}

Let \((M^n,g)\) be a complete, connected and non-compact Riemannian manifold of dimension \(n\geqslant5\) with non-negative Ricci curvature. Let \(\nabla\) and \(\Delta\) be the Levi-Civita connection and the Laplace-Beltrami operator w.r.t. the metric \(g\). Denote the Ricci curvature as \(\mathrm{Ric}\). We consider
\begin{equation}
    \Delta^2 u=f(u),
    \qquad
    u\in \mathcal{C}^5(M,\mathbb R_+),
    \qquad
    f\in \mathcal{C}^1(\mathbb R_+,\mathbb R_+),
    \tag{1.1}
    \label{eq}
\end{equation}
where $\mathbb{R}_+$ denotes the set of positive real numbers. The critical nonlinearity is $f(u)=u^{\frac{n+4}{n-4}}$.

One of the backgrounds of the biharmonic equation is seeking the best constant in the Sobolev embedding $\mathcal{H}^2(\mathbb{R}^n)\hookrightarrow \mathcal{L}^{\frac{2n}{n-4}}(\mathbb{R}^n)$, and by the calculus of variations, we obtain that the extremal functions satisfy the equation
\[
    \Delta^2 u=|u|^{\frac{8}{n-4}}u.
\]
Another background is that when $f(u)=u^p$, the biharmonic equation can be considered as a special case (when \(q=1\)) of the Lane-Emden system
\begin{equation*}
    \begin{cases}
        -\Delta u = v^q, & x \in \Omega,\\
        -\Delta v = u^p, & x \in \Omega,
    \end{cases}
\end{equation*}
where $p\geqslant q\geqslant1$. In this case, Ma-Wu-Wu \cite{MWW25} proved non-existence of positive superharmonic solution to \(\Delta^2 u=u^{p}\) in the subcritical case \(1<p<\frac{n+4}{n-4}\) with \(n\geqslant 5\).

In the critical case, with the help of the invariant tensor technique, we obtain the following classification theorems. Our first result concerns the case $n=5$ and is stated as follows.
\begin{theorem}
Let $(M^5,g)$ be a complete, connected, non-compact Riemannian manifold
of dimension $n=5$ satisfying $\mathrm{Ric}\geqslant 0$. Then there exists a positive smooth function satisfying
\[
    \Delta ^2 u=u^9,\qquad \limsup_{x\to\infty}\Delta u(x)\leqslant0,
\]
if and only if $(M^5,g)$ is isometric to $(\mathbb{R}^5,g_E)$. Moreover, $u$ is of the form
\[
    u(x)
    =
    \lambda^{\frac14}105^{\frac18}
    \left(1+\lambda |x-x_0|^2\right)^{-\frac12},
    \qquad
    \lambda\in\mathbb{R}_+,\quad x_0\in\mathbb{R}^5.
\]
\end{theorem}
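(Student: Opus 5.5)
We argue by reducing the problem to a second-order system and then running an integral identity built from a suitable trace-free tensor. The ``if'' direction is a direct computation: on \(\mathbb{R}^5\) the function \(u=c(1+\lambda|x-x_0|^2)^{-1/2}\) satisfies \(\Delta^2u=u^9\) exactly when \(c^8=105\lambda^2\), i.e.\ \(c=\lambda^{1/4}105^{1/8}\). Moreover \(\Delta u<0\) everywhere and \(\Delta u\to0\) at infinity, so the condition \(\limsup\Delta u\le 0\) holds.

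For the converse, set \(v=-\Delta u\), so that \(-\Delta v=u^9>0\) and \(v\) is superharmonic. The first step is to show \(v>0\), i.e.\ \(u\) is superharmonic. Suppose \(v(x_1)<0\) at some point. Since \(\limsup_{x\to\infty}(-v)\le0\), the function \(-v\) would attain a positive interior maximum. But \(-v\) is strictly subharmonic, so this contradicts the strong maximum principle. Hence \(v\ge 0\), and in fact \(v>0\) by the strong maximum principle. We now have a positive solution of the Lane--Emden type system \(-\Delta u=v\), \(-\Delta v=u^9\) on a manifold with \(\mathrm{Ric}\ge0\).

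The second step collects decay and integrability. Positive superharmonic functions on a manifold with \(\mathrm{Ric}\ge0\) satisfy a Harnack inequality and Liouville-type bounds; these come from Li--Yau gradient estimates applied to \(\log u\) and \(\log v\). Combined with the nonlinear relation, they should give a pointwise estimate of the form \(|\nabla^2 u|\lesssim\) powers of \(u\), and \(v\sim u^{3}\) on balls. This is the optimal second-order estimate mentioned in the abstract, obtained by Bernstein's method and continuity. From it we get that cut-off error terms decay like \(R^{-\epsilon}\) on \(B_R\), using volume growth \(|B_R|\le CR^5\) from Bishop--Gromov. In dimension \(n=5\) the exponent is very favorable, which is why only the sign condition at infinity is needed here rather than a finite energy hypothesis.

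The third step, which is the core, is the invariant tensor argument. Following the Obata/Serrin--Zou approach, we work with the auxiliary function \(w=u^{-2}\), which is quadratic on the model solution, and form the trace-free tensor
\[E_{ij}=\nabla_{ij}w-\tfrac{\Delta w}{5}g_{ij},\]
possibly modified by \(v\)-dependent terms so that it vanishes identically on the bubble. Using the Bochner formula with \(\mathrm{Ric}\ge0\) and the system, we derive a differential inequality of the form
\[\mathrm{div}(\text{weight}\cdot X)\ \ge\ c\,\text{weight}\,|E|^2+\text{weight}\,\mathrm{Ric}(\nabla w,\nabla w).\]
Multiplying by a cutoff and integrating, the decay from step two kills the boundary terms. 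This forces \(E\equiv0\) and \(\mathrm{Ric}(\nabla w,\nabla w)\equiv0\).

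Finally, \(\nabla^2 w=\frac{\Delta w}{5}g\) together with the equation shows that \(\Delta w\) is constant and positive, so \(\nabla^2 w=c\,g\) with \(c>0\). By the Tashiro/Obata-type rigidity for nonnegative functions with constant positive Hessian on a complete manifold, \(M\) is isometric to \(\mathbb{R}^5\) and \(w=a+b|x-x_0|^2\). The normalization from the equation then gives the stated formula for \(u\).

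The main obstacle is step three. One must find the right weights and tensor so that the Bochner identity for this fourth-order (system) problem yields a \emph{nonnegative} right-hand side; the coupled terms involving \(v\) and \(\nabla v\) must be absorbed by a clever choice of combination. The justification of the integration by parts in step two is the secondary difficulty.
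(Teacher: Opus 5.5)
Your outline follows the paper's skeleton: superharmonicity by the maximum principle, the trace-free Hessian of $w=u^{-2}$ (exactly the paper's $E_{ij}$ up to a factor), cut-off integration, and Tashiro. Your ``if'' direction and Step 1 are correct. The gap is that the two steps that carry the proof are missing, and the one you do describe is wrong.

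\textbf{Step 2 does not give what is needed.} Li--Yau gradient estimates apply to positive harmonic or caloric functions. A positive superharmonic function only satisfies a weak Harnack inequality, so nothing like $|\nabla^2u|\leqslant Cu^k$ or $v\sim u^3$ follows, and neither bound is what is needed anyway. The paper's optimal estimate is one-sided and becomes an equality on the bubbles:
\[
\tilde h=\Delta u+2\frac{|\nabla u|^2}{u}+\sqrt{\tfrac{5}{21}}\,u^{5}\leqslant 0\qquad(n=5).
\]
It is proved by applying the maximum principle to $X_a\eta^4$, where $X_a=\frac{\Delta u}{u}+a\frac{|\nabla u|^2}{u^2}+b_5a\,u^4$, and pushing $a$ from $0$ to $2$ by a continuity argument (Lemma 2.1, Propositions 2.2--2.3).

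\textbf{Step 3 is not carried out, and $E_{ij}$ alone cannot close it.} The tensor itself needs no modification, since the trace-free Hessian of $u^{-2}$ already vanishes on bubbles. What is missing is the vector field. Third derivatives of $u$ enter, so the paper introduces the vector $F_i=h_{,i}-4E_i-5\frac hu u_i$ and the scalar $G=\Delta^2u-\frac{21}{5}\frac{h^2}{u}$. It then takes the specific combination (3.10) of (3.3)--(3.9), with weights $\tilde h$, $|\nabla u|^2/u$, $u^5$ and $\beta=2$.

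The right-hand side of (3.10) is nonnegative only because of three facts: $\tilde h\leqslant0$; $G\leqslant0$, which is (3.2) and again follows from $\tilde h\leqslant0$ plus the equation; and the algebraic Lemma 3.2. So the pointwise estimate is needed for the sign of the bulk term, not merely to make cut-off errors small, as your plan assumes.

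\textbf{The $n=5$ decay has a different mechanism.} It is not an $R^{-\epsilon}$ coming from pointwise bounds. One tests (3.10) against $u^{-2}\eta^\gamma$. This weight makes $u^{-2}(\Delta u)^2$ invariant under $u\mapsto tu$, which is why no integrability of $u$ is needed. The cut-off error is bounded by $R^{-2}\int u^{-2}(\Delta u)^2\eta^{\gamma-2}$. That term is controlled through the auxiliary identity for $\operatorname{div}(u^{-2}\Delta u\,\nabla u)$, ending with the bound $CR^{-6}\operatorname{Vol}(B_{2R})\leqslant CR^{-1}$.

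Without these three ingredients the argument does not close. A minor point: constancy of $\Delta w$ comes from taking the divergence of $\nabla^2w=\frac{\Delta w}{5}g$ with $\mathrm{Ric}(\nabla w)=0$, not from the equation. Its positivity comes from $h<0$.
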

For a general dimension $n\geqslant5$, we impose an additional integrability condition.
\begin{theorem}
Let $(M^n,g)$ be a complete, connected, non-compact Riemannian manifold of dimension $n\geqslant 5$ satisfying $\mathrm{Ric}\geqslant 0$. Then there exists $u\in \mathcal{C}^\infty(M,\mathbb R_+)\cap\mathcal{L}^{\frac{2n}{n-4}}(M)$ satisfying $\Delta ^2 u=u^{\frac{n+4}{n-4}}$ if and only if $(M^n,g)$ is isometric to $(\mathbb{R}^n,g_E)$. Moreover, $u$ is of the form
\begin{equation*}
    u(x)
    =
    \frac{
    \left[\lambda^2 n(n-4)(n^2-4)\right]^{\frac{n-4}{8}}
    }
    {\left(1+\lambda|x-x_0|^2\right)^{\frac{n-4}{2}}},
    \qquad
    \lambda\in\mathbb{R}_+,\quad x_0\in\mathbb{R}^n.
\end{equation*}
\end{theorem}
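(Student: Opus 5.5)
The "if" direction is a direct computation: on \(\mathbb{R}^n\) the bubble \(u_\lambda\) solves \(\Delta^2u=u^{\frac{n+4}{n-4}}\) and lies in \(\mathcal{L}^{\frac{2n}{n-4}}\). Most of the work is in the converse. I would split it into three stages: an a priori second-order estimate, an integral identity built from invariant tensors, and a rigidity argument.

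\textbf{Step 1: sign and decay.} Set \(v=-\Delta u\), so that \(-\Delta u=v\) and \(-\Delta v=u^{\frac{n+4}{n-4}}\). First I would show \(v>0\), i.e.\ that \(u\) is superharmonic. Since \(\Delta v=-u^{p}\le 0\), the function \(v\) is superharmonic. If \(v(x_0)<0\) somewhere, spherical averages on a manifold with \(\mathrm{Ric}\ge 0\) (mean value inequalities via Laplacian comparison) force \(\bar u\) to grow quadratically. That contradicts \(u\in\mathcal{L}^{\frac{2n}{n-4}}\) once combined with Bishop--Gromov volume growth. Next I would use the integrability together with a Moser iteration / \(\varepsilon\)-regularity argument (Ric \(\ge0\) gives the Sobolev and Harnack tools on balls) to get \(u(x)\to0\) and \(v(x)\to0\) at infinity. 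This also places us in the setting of the \(n=5\) theorem, where \(\limsup\Delta u\le0\) is assumed.

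\textbf{Step 2: optimal pointwise estimate.} By Bernstein's method I would bound a quantity such as \(|\nabla u|^2u^{-2}\), or more precisely \(v\,u^{-\frac{n-2}{n-4}}\) and \(|\nabla u|\,u^{-\frac{n-3}{n-4}}\), by sharp constants realized by the bubble. I would apply the maximum principle to an auxiliary function \(P=v-c\,u^{\frac{n}{n-4}}-\dots\), compute \(\Delta P\) with Bochner's formula, and use \(\mathrm{Ric}\ge0\) to drop curvature terms. A continuity argument in the constant \(c\) (starting from large \(c\) and decreasing) handles the lack of compactness, with the decay from Step 1 controlling the behavior at infinity. I expect this step to be the main obstacle: the fourth-order structure makes the maximum principle computation messy, and obtaining the \emph{optimal} constant is exactly what makes the later remainder sign-definite.

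\textbf{Step 3: invariant tensors and rigidity.} Following the Serrin--Obata method as adapted for \(-\Delta u=u^{\frac{n+2}{n-2}}\) (Catino--Monticelli, Ou, Ma--Wu), I would introduce the auxiliary function \(w=u^{-\frac{2}{n-4}}\), for which the bubble has \(\nabla^2w=\frac{\Delta w}{n}g\). I would then form the traceless tensor \(E=\nabla^2w-\frac{\Delta w}{n}g\) together with a companion tensor built from \(v\). Computing \(\operatorname{div}\) of a suitable vector field \(X=w^{a}\bigl(E(\nabla w,\cdot)+\dots\bigr)\) should give a differential inequality of the form
\begin{equation*}
\operatorname{div}X\ \ge\ c\,w^{b}\bigl(|E|^2+\mathrm{Ric}(\nabla w,\nabla w)\bigr)\ \ge 0,
\end{equation*}
where the Step 2 estimate absorbs the indefinite cross terms. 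Integrating against a cutoff \(\eta_R\), with \(|\nabla\eta_R|\le C/R\), and using the decay and integrability bounds, the boundary term vanishes as \(R\to\infty\). Hence \(E\equiv0\) and \(\mathrm{Ric}(\nabla w,\nabla w)\equiv0\).

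Then \(\nabla^2w=\phi g\) with \(w\) proper and growing quadratically. Tashiro/Obata-type rigidity shows that \(w\) has a unique critical point and that \(M\) is a warped product \(dr^2+(w'/w''(0))^2g_{S^{n-1}}\). Since \(\mathrm{Ric}\ge0\) and \(\phi\) turns out to be constant (from the equation), \(M\) is isometric to \(\mathbb{R}^n\). Finally, with \(w=a+\lambda'|x-x_0|^2\), substituting into the equation fixes the constants and yields the stated formula for \(u\).
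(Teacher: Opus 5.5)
Your architecture matches the paper's: superharmonicity, a sharp pointwise bound via Bernstein's method and continuity, a divergence inequality built from the traceless Hessian of $w=u^{-\frac{2}{n-4}}$ and a third-order companion, cutoff integration, then Tashiro. However, the mechanisms you give for Steps 1--2 fail under $\mathrm{Ric}\geqslant0$ alone.

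Spherical means are unusable. Geodesic spheres have non-constant mean curvature $H$ and meet the cut locus, so $\bar v'(r)$ contains the sign-indefinite term $|\partial B_r|^{-1}\int_{\partial B_r}(v-\bar v)H$, and $\bar u$ satisfies no radial ODE driven by $\bar v$. Laplacian comparison bounds $H$, but not this covariance.

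Your decay claim is also unsupported. $\mathrm{Ric}\geqslant0$ gives only volume-normalized local Sobolev and mean-value inequalities, and $\mathrm{Vol}(B_1(x))$ need not stay bounded below as $x\to\infty$. So $\int_{B_1(x)}u^{\frac{2n}{n-4}}\to0$ does not yield $u(x)\to0$, yet your Step 2 relies on this decay to run the maximum principle at infinity.

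The paper needs no decay. Superharmonicity comes from a Caccioppoli argument. Testing $\Delta^2u\geqslant0$ against $\eta^6(\Delta u)_+$ gives $\int\eta^6|\nabla(\Delta u)_+|^2\leqslant CR^{-2}\int(\Delta u)^2\eta^4\leqslant CR^{-2}\int_{B_R}u^{\frac{2n}{n-4}}+CR^{-6}\int_{B_R}u^2=O(1)$. The Caccioppoli trick upgrades this to $\nabla(\Delta u)_+\equiv0$, whence $\Delta u\leqslant0$. The maximum principle is then localized (Lemma 2.1): at a maximum of $X\eta^4$, the coercive inequality $\operatorname{div}(u^{2-2a}\nabla X)\geqslant\delta u^{2-2a}(X^2+|\nabla u|^4u^{-4})$ absorbs all cutoff errors.

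Your continuity also runs the wrong way. It must go from the trivially true endpoint to the sharp one: the paper raises $a$ from $0$ to $\frac{2}{n-4}$ in $X_a=\frac{\Delta u}{u}+a\frac{|\nabla u|^2}{u^2}+b_na\,u^{\frac{4}{n-4}}$, reaching $\tilde h\leqslant0$, which holds with equality on bubbles. Starting from large $c$ starts at the strongest, unknown statement. Moreover, your ratios $vu^{-\frac{n-2}{n-4}}$ and $|\nabla u|u^{-\frac{n-3}{n-4}}$ are neither constant on bubbles nor scale-invariant, so they have no sharp universal bound.

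The second gap is in Step 3. ``The boundary term vanishes as $R\to\infty$'' is precisely where $u\in\mathcal{L}^{\frac{2n}{n-4}}$ must enter, and it is not automatic. After Young's inequality and $\tilde h\leqslant0$, the cutoff error is of order $R^{-2}\int_Mu^{-\frac{2}{n-4}}(\Delta u)^2\eta^{\gamma-2}$. This carries a negative power of $u$, so qualitative decay gives nothing and integrability does not bound it directly.

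The missing idea is a dimension-dependent reduction through auxiliary divergence identities. For $n\geqslant7$ the paper uses the identities for $u^{-\frac{2}{n-4}-1}|\nabla u|^2\nabla u$ and $u^{-\frac{2}{n-4}}h\nabla u$, together with $-G\geqslant c_n^{-2}\tilde h^2/u$. These bound the integral of the good terms by $CR^{-6}\int u^{2-\frac{2}{n-4}}\eta^{\gamma-6}+CR^{-2}\int u^{\frac{2n-2}{n-4}}\eta^{\gamma-2}$, which is $O(R^{-1})$ by H\"older and Bishop--Gromov. For $n=6$ it needs a logarithmic variant, and for $n=5$ a separate identity for $u^{-2}\Delta u\,\nabla u$.

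For any such absorption, the good side of your inequality must contain $E_iE^i$, $F_iF^i$ and $-\frac{|\nabla u|^2}{u}G\geqslant0$, not only $|E|^2+\mathrm{Ric}(\nabla w,\nabla w)$. Since $\operatorname{div}E$ involves $\nabla\Delta w$, the vector field needs $F_i$- and $Gu_i$-components, and their cutoff terms cannot be absorbed by $|E|^2$ alone.

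Your rigidity step is fine once $E\equiv0$ and $\mathrm{Ric}(\nabla w,\cdot)\equiv0$. The contracted Bochner identity, not the equation, then gives $\nabla\Delta w=0$; $\Delta w>0$ follows from $\tilde h\leqslant0$; and Tashiro's theorem concludes.
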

Furthermore, the integrability condition can be relaxed to the following weaker assumption: for some $p\in M$ and large $R$,
\begin{equation*}
    \int_{B(p, R)}
    u^{\frac{2n}{n-4}}
    =
    O(R^{\nu_n}),
    \qquad
    \nu_n
    =
    \begin{cases}
        2,\qquad n=5\ or\ 6,\\
        \frac{n}{n-1},\qquad n\geqslant7.
    \end{cases}
\end{equation*}

Related second-order semilinear equations in complete, connected, and non-compact Riemannian manifolds with non-negative Ricci curvature have been widely studied. Gidas-Spruck \cite{GS81} proved non-existence of positive solution to \(-\Delta u=u^p\) in the subcritical case \(1<p<\frac{n+2}{n-2}\) with \(n\geqslant 3\). The critical case \(p=\frac{n+2}{n-2}\) is more difficult. Caffarelli-Gidas-Spruck \cite{CGS89} (also see Chen-Li \cite{CL91}) classified positive solutions via the moving plane method in \(\mathbb R^n\). The rigidity result in the critical case, that is, the manifold must be \(\mathbb R^n\), was investigated under some assumptions of decay at infinity or integrable condition, such as Fogagnolo-Malchiodi-Mazzieri \cite{FMM23}, Catino-Monticelli \cite{CM22}. Recently, to remove assumptions, some works succeeded in lower dimensions, such as Catino-Monticelli-Roncoroni \cite{CMR23}, Ou \cite{Ou25}, V\'etois \cite{Vet24} and Sun-Wang \cite{SW25}.

Let us focus on the fourth-order case. Lin \cite{Lin98} classified exponentially integrable solutions with a growth condition, namely $e^{4u}\in \mathcal{L}^1(\mathbb R^4)$ and $|u(x)| = o(|x|^2)$ at $\infty$, to \(\Delta^2u=\mathrm 6e^{4u}\) in \(\mathbb R^4\) and positive solutions to \(\Delta^2u=u^p\) in \(\mathbb R^n\) with \(n\geqslant5\) and \(1<p\leqslant\frac{n+4}{n-4}\) by the moving plane method. However, it doesn't work in general manifolds. It's noteworthy that corresponding problems on closed manifolds with a positive lower Ricci curvature bound have been studied recently. We recommend readers to see V\'etois \cite{Vet24}, Li-Wei \cite{LW25}, Case \cite{Cas24}, Ma-Wu-Zhou \cite{MWZ25a,MWZ25b}.

In Theorem 1.1 and 1.2, we extend Lin's theorem in the critical case to complete, connected, and non-compact manifolds with non-negative Ricci curvature. Unlike backgrounds from conformal geometry in \cite{Vet24,LW25}, we establish differential identities using the invariant tensor technique. This technique was introduced by Ma-Wu \cite{MW24} to study second-order semi-linear equations on manifolds. Ma-Wu-Zhou \cite{MWZ25a} extended results of V\'etois and Li-Wei on closed manifolds via this technique.

Throughout this paper, \(\varepsilon\) and \(C\) denote positive constants independent of the radius \(R\). Besides, \(\varepsilon\) is always sufficiently small. Since $(M^n,g)$ is complete and $\mathrm{Ric}\geqslant0$, for any $p\in M$ and $R\in\mathbb{R}_+$, there exists a standard cut-off function $\eta\in \mathcal{C}_c^\infty(M)$ constructed via the heat kernel such that
\[
    0\leqslant\eta\leqslant1,
    \qquad
    \operatorname{supp}(\eta)\subset B(p,2R),
    \qquad
    \eta\equiv1\quad\text{on }B(p,R),
\]
and
\begin{equation}
    |\nabla\eta|\leqslant\frac{C}{R},
    \qquad
    |\Delta\eta|\leqslant\frac{C}{R^2},
    \tag{1.2}
    \label{eta}
\end{equation}
where $C$ depends only on $n$. We write $B_R:=B(p,R)$.

In local coordinates, we denote the metric tensor as \(g_{ij}\), and the Ricci curvature tensor as \(R_{ij}\). Denote the inner product w.r.t. \(g\) as \(\langle\cdot,\cdot\rangle\). We employ the Einstein summation convention to lower and raise the indices using the metric tensor \(g_{ij}\) and its inverse \(g^{ij}\). All covariant derivatives are denoted with a comma, such as \(_{,i}\), except for acting on our solution \(u\) and the cutoff function \(\eta\).

Regarding the organization of this paper, in Section 1 we prove that the superharmonicity of the solution can be derived from two different conditions, respectively. In Section 2, we establish a prior estimate of second derivatives via Bernstein's technique and the continuity method. In Section 3, the crucial differential inequality is obtained by the invariant tensor technique. In Section 4, we establish integral estimates to prove Theorem 1.1 and 1.2.

The purpose of this paper is to classify positive solutions of the critical equation under the assumptions that $\mathrm{Ric}\geqslant0$ and $\Delta u\leqslant 0$. When the manifold is $\mathbb{R}^n$, Fazly-Wei-Xu \cite{FWX15} showed that if
\[
    \Delta^2u\geqslant |x|^a u^p,\qquad u\in \mathcal{C}^4(\mathbb R^n,\mathbb R_+),\quad p>1,\quad a\geqslant0,
\]
then $\Delta u\leqslant0$. When $M$ is a general complete, connected and non-compact manifold, if $\Delta^2 u\geqslant0$, then $\Delta u\leqslant0$ follows from a weaker condition
\begin{equation}
    \limsup_{d(p,x)\to+\infty}\Delta u(x)\leqslant0.
    \tag{1.3}
    \label{Delta u}
\end{equation}
Let \(v=\Delta u\), then \(\Delta v\geqslant0\). Suppose that there exists \(q\in M\) such that \(v(q)>0\). By the assumption, there exists a compact set \(K\) such that \(v(x)<v(q)\) for all \(x\notin K\). Hence \(v\) has a global maximum point \(x_0\in K\), thus the set \(\{v=v(x_0)\}\) is non-empty and closed. On the other hand, the strong maximum principle and \(\Delta v\geqslant0\) give that \(\{v=v(x_0)\}\) is open. Therefore, by the connectedness of the manifold, \(v\equiv v(x_0)>0\), which contradicts \eqref{Delta u}. The superharmonicity condition can also be deduced from the following integrability assumption.
\begin{proposition}
Let $(M^n,g)$ be a complete, connected, non-compact Riemannian manifold of dimension $n\geqslant5$ with non-negative Ricci curvature. Let $u$ be a positive $\mathcal{C}^4$ function on $M$ such that $0\leqslant\Delta^2u\leqslant Cu^{\frac{n+4}{n-4}}$ and for some $p\in M$ and large $R$,
\begin{equation}
    \int_{B_R}
    u^{\frac{2n}{n-4}}
    =
    O(R^2).
    \tag{1.4}
    \label{integrability}
\end{equation}
Then $\Delta u\leqslant0$.
\end{proposition}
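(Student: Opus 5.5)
The plan is to argue by contradiction on $v:=\Delta u$. Since $\Delta v=\Delta^2u\geqslant0$, $v$ is subharmonic, and so is $v_+$. Assuming $v(q)>0$ for some $q$, I will use (1.4) to show that $\nabla v_+\equiv0$, hence $v\equiv a>0$ is constant. Then I will rule out a positive constant Laplacian using the growth of $u$ forced by (1.4).

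\textbf{Step 1: an $L^2$ bound for $v$.} Test $\Delta^2u\leqslant Cu^{\frac{n+4}{n-4}}$ against $u\eta^4$, with $\eta$ the cut-off in (1.2). Integrating by parts twice,
\[
\int_M u\eta^4\Delta^2u=\int_M v\,\Delta(u\eta^4)=\int_M v^2\eta^4+2\int_M v\langle\nabla u,\nabla\eta^4\rangle+\int_M uv\,\Delta\eta^4 ,
\]
and the left side is at most $C\int_{B_{2R}}u^{\frac{2n}{n-4}}=O(R^2)$.

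\begin{itemize}
\item The last term is absorbed by Cauchy--Schwarz, $|uv\Delta\eta^4|\leqslant \varepsilon v^2\eta^4+C R^{-4}u^2\eta^{2}$.
\item For the gradient term, integrate $\int v\langle\nabla u,\nabla\eta^4\rangle$ by parts once more, or use $\int|\nabla u|^2\eta^2=-\int u v\eta^2-\int u\langle\nabla u,\nabla\eta^2\rangle$. This reduces it to $\varepsilon\int v^2\eta^4$ plus $CR^{-4}\int_{B_{2R}}u^2$.
\end{itemize}

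By H\"older,
\[
R^{-4}\int_{B_{2R}}u^2\leqslant CR^{-4}\Big(\int_{B_{2R}}u^{\frac{2n}{n-4}}\Big)^{\frac{n-4}{n}}V(2R)^{\frac4n}.
\]
Bishop--Gromov gives $V(2R)\leqslant CR^n$, so this is $O(R^{\frac{2(n-4)}{n}})=O(R^2)$. Hence $\int_{B_R}v^2=O(R^2)$.

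\textbf{Step 2: $v_+$ is constant.} Since $v_+\geqslant0$ is subharmonic, the Caccioppoli inequality gives
\[
\int_{B_R}|\nabla v_+|^2\leqslant CR^{-2}\int_{B_{2R}}v_+^2\leqslant C,
\]
so $\nabla v_+\in\mathcal L^2(M)$. Rerunning the Caccioppoli argument with the gradient of $\eta$ supported in the annulus $A_R=B_{2R}\setminus B_R$ gives
\[
\int_{B_R}|\nabla v_+|^2\leqslant \frac CR\Big(\int_{A_R}|\nabla v_+|^2\Big)^{1/2}\Big(\int_{B_{2R}}v_+^2\Big)^{1/2}\leqslant C\Big(\int_{A_R}|\nabla v_+|^2\Big)^{1/2}.
\]
The right side tends to $0$ as $R\to\infty$, because $\nabla v_+\in\mathcal L^2$. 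Hence $\nabla v_+\equiv0$, so $v_+\equiv v_+(q)=:a>0$, which forces $v\equiv a$ on $M$.

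\textbf{Step 3: contradiction.} With $\Delta u\equiv a$,
\[
a\int_M\eta=\int_M u\,\Delta\eta\leqslant CR^{-2}\int_{B_{2R}}u\leqslant CR^{-2}\Big(\int_{B_{2R}}u^{\frac{2n}{n-4}}\Big)^{\frac{n-4}{2n}}V(2R)^{\frac{n+4}{2n}}.
\]
The left side is at least $aV(R)$. Using (1.4) and volume doubling, this gives
\[
aV(R)^{\frac{n-4}{2n}}\leqslant CR^{-2+\frac{n-4}{n}}=CR^{-\frac{n+4}{n}}\to0 .
\]
But $V(R)$ is nondecreasing and positive, so $V(R)^{\frac{n-4}{2n}}$ stays bounded below, a contradiction.

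The main obstacle is Step 1. Controlling the cross terms involving $\nabla u$ without losing more than $O(R^2)$ needs care, including the correct powers of $\eta$. The $O(R^2)$ threshold in (1.4) is exactly what makes the Caccioppoli bound in Step 2 uniformly bounded.
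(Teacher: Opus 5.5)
Your proof is correct and follows the paper's argument. Step 1 is the paper's bound $\int_M(\Delta u)^2\eta^4\leqslant C\int_{B_{2R}}u^{\frac{2n}{n-4}}+CR^{-4}\int_{B_{2R}}u^2$ from testing against $u\eta^4$, Step 2 is the same Caccioppoli trick giving $\nabla v_+\equiv0$, and only the final contradiction differs. There you integrate $\Delta u\equiv a$ against $\eta$ and use H\"older with Bishop--Gromov doubling, whereas the paper tests against $\eta^2u^{\frac{n+4}{n-4}}$ and runs a second Caccioppoli argument to force $u$ to be constant; both endings are valid, and yours is slightly shorter.
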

\begin{proof}
Let $v=\Delta u$ and $v_+=\max\{v,0\}$, then $0\leqslant\Delta v\leqslant u^{\frac{n+4}{n-4}}$. We first derive a Caccioppoli-type estimate. For $p\in M$ and large $R$, choose the cut-off function $\eta$ associated with $\frac R2$ and take $\eta^6v_+$ as a test function. Integration by parts gives
\begin{align*}
0\leqslant{}
\int_M\eta^6v_+\Delta v
&=
-\int_M\langle\nabla(\eta^6v_+),\nabla v\rangle
\nonumber\\
&=
-\int_M\eta^6\langle\nabla v_+,\nabla v\rangle
-
6\int_M\eta^5 v_+\langle\nabla\eta,\nabla v\rangle
\nonumber\\
&=
-\int_M\eta^6|\nabla v_+|^2
-
6\int_M\eta^5 v_+\langle\nabla\eta,\nabla v_+\rangle.
\end{align*}
Thus
\begin{align*}
\int_M\eta^6|\nabla v_+|^2
&\leqslant{}
6\int_{B_R\setminus B_{R/2}}\eta^5 v_+|\nabla\eta||\nabla v_+|
\nonumber\\
&\leqslant{}
\frac12\int_M\eta^6|\nabla v_+|^2
+
C\int_M\eta^4v_+^2|\nabla \eta|^2.
\tag{1.5}
\label{v 1}
\end{align*}
Then by \eqref{eta},
\begin{align*}
\int_M\eta^6|\nabla v_+|^2
&\leqslant{}
C\int_M\eta^4v_+^2|\nabla \eta|^2
\leqslant{}
CR^{-2}\int_M(\Delta u)^2\eta^4.
\tag{1.6}
\label{v 2}
\end{align*}
Again integration by parts and \eqref{eta} give
\begin{align*}
\int_M(\Delta u)^2\eta^4
&\leqslant{}
-\int_M\langle\nabla \Delta u,\nabla u\rangle\eta^4
+
CR^{-1}
\int_M|\Delta u||\nabla u|\eta^3
\nonumber\\
&=
\int_Mu\Delta^2 u\eta^4
+
\int_Mu\langle\nabla \Delta u,\nabla \eta^4\rangle
+
CR^{-1}
\int_M|\Delta u||\nabla u|\eta^3
\nonumber\\
&=
\int_Mu\Delta^2 u\eta^4
-
\int_Mu\Delta u\Delta \eta^4
-
\int_M\Delta u\langle\nabla u,\nabla \eta^4\rangle
+
CR^{-1}
\int_M|\Delta u||\nabla u|\eta^3
\nonumber\\
&\leqslant{}
\int_Mu\Delta^2 u\eta^4
+
CR^{-1}
\int_M|\Delta u||\nabla u|\eta^3
+
CR^{-2}
\int_Mu|\Delta u|\eta^2
\nonumber\\
&\leqslant{}
C\int_{B_R}u^{\frac{2n}{n-4}}
+
\frac12\int_M(\Delta u)^2\eta^4
+
CR^{-2}
\int_M|\nabla u|^2\eta^2
+
CR^{-4}
\int_{B_R}u^2.
\end{align*}
Since
\begin{align*}
\int_M|\nabla u|^2\eta^2
&\leqslant{}
\int_Mu|\Delta u|\eta^2
+
CR^{-1}
\int_Mu|\nabla u|\eta
\nonumber\\
&\leqslant{}
\int_Mu|\Delta u|\eta^2
+
\frac12\int_M|\nabla u|^2\eta^2
+
CR^{-2}
\int_{B_R}u^2,
\end{align*}
therefore,
\begin{align*}
\int_M(\Delta u)^2\eta^4
&\leqslant{}
C\int_{B_R}u^{\frac{2n}{n-4}}
+
CR^{-2}
\int_Mu|\Delta u|\eta^2
+
CR^{-4}
\int_{B_R}u^2
\nonumber\\
&\leqslant{}
C\int_{B_R}u^{\frac{2n}{n-4}}
+
\frac12\int_M(\Delta u)^2\eta^4
+
CR^{-4}
\int_{B_R}u^2.
\end{align*}
Then by \eqref{integrability}, \eqref{v 2}, the Bishop--Gromov volume comparison theorem and Hölder's inequality,
\begin{align*}
w_R
:=
\int_{B_{R/2}}|\nabla v_+|^2
&\leqslant{}
\int_M\eta^6|\nabla v_+|^2
\leqslant{}
CR^{-2}\int_M(\Delta u)^2\eta^4
\nonumber\\
&\leqslant{}
CR^{-2}
\int_{B_R}u^{\frac{2n}{n-4}}
+
CR^{-6}
\int_{B_R}u^2
\nonumber\\
&\leqslant{}
CR^{-2}
\int_{B_R}u^{\frac{2n}{n-4}}
+
CR^{-2}
\left(
\int_{B_R}u^{\frac{2n}{n-4}}
\right)^{\frac{n-4}{n}}
\leqslant{}C.
\end{align*}
Now we use the Caccioppoli trick. Since $w_R$ is bounded and increasing with respect to $R$, the limit exists as $R\to+\infty$, which is denoted by $w$. Then we have
\[
    w=\int_M|\nabla v_+|^2\leqslant{}C.
\]
By \eqref{v 1}, for any $A\in\mathbb{R}_+$,
\begin{align*}
w_R
&\leqslant{}
\int_M\eta^6|\nabla v_+|^2
\leqslant{}
6\int_{B_R\setminus B_{R/2}}\eta^5 v_+|\nabla\eta||\nabla v_+|
\nonumber\\
&\leqslant{}
A\int_{B_R\setminus B_{R/2}}\eta^6|\nabla v_+|^2
+
CA^{-1}R^{-2}\int_M(\Delta u)^2\eta^4
\leqslant{}
A\left(w-w_R\right)
+
CA^{-1}.
\end{align*}
Then letting $R\to+\infty$ gives $w\leqslant{}CA^{-1}$. Finally, letting $A\to+\infty$ gives
\begin{equation}
    w=\int_{\{v>0\}}|\nabla v|^2=0.
    \tag{1.7}
    \label{v 3}
\end{equation}
Suppose that there exists \(q\in M\) such that $v(q)>0$. Consider the set $\{v=v(q)\}$, which is obviously non-empty and closed. If $v(x)=v(q)>0$, then we can choose a sufficiently small good coordinate neighborhood $U_x\subseteq\{v>0\}$. Thus from \eqref{v 3}, we obtain that $\nabla v\equiv0$ on $U_x$. Hence $U_x\subseteq\{v=v(q)\}$ and then $\{v=v(q)\}$ is open. Consequently, by the connectedness, $v\equiv v(q)>0$. Take $\eta^2u^{\frac{n+4}{n-4}}$ as a test function and integration by parts gives
\begin{align*}
0\leqslant{}
\int_M\eta^2u^{\frac{n+4}{n-4}}\Delta u
=
-\frac{n+4}{n-4}\int_M\eta^2u^{\frac{8}{n-4}}|\nabla u|^2
-2\int_M\eta u^{\frac{n+4}{n-4}}\langle\nabla u,\nabla \eta\rangle.
\end{align*}
Thus
\begin{align*}
\int_M\eta^2u^{\frac{8}{n-4}}|\nabla u|^2
&\leqslant{}
CR^{-1}
\int_{B_R\setminus B_{R/2}}\eta u^{\frac{n+4}{n-4}}|\nabla u|
\nonumber\\
&\leqslant{}
\frac12\int_M\eta^2u^{\frac{8}{n-4}}|\nabla u|^2
+
CR^{-2}
\int_{B_R}u^{\frac{2n}{n-4}}.
\tag{1.8}
\label{v 4}
\end{align*}
Hence by \eqref{integrability},
\begin{align*}
\int_{B_{R/2}}u^{\frac{8}{n-4}}|\nabla u|^2
\leqslant{}
\int_M\eta^2u^{\frac{8}{n-4}}|\nabla u|^2
&\leqslant{}
CR^{-2}
\int_{B_R}u^{\frac{2n}{n-4}}
\leqslant{}C.
\end{align*}
Again by the Caccioppoli trick and \eqref{v 4}, we have
\[
    \int_Mu^{\frac{8}{n-4}}|\nabla u|^2=0.
\]
This implies that $u$ is constant, contradicting $\Delta u>0$.
\end{proof}
\section{Second-Order Derivative Estimates}

\subsection{Preliminaries}

Let $(M^n,g)$ be a complete, connected, non-compact Riemannian manifold,
$n\geqslant 5$, satisfying $\mathrm{Ric}\geqslant 0$, and let
\[
    u\in \mathcal{C}^4(M,\mathbb R_+),
    \qquad
    \Delta^2u\geqslant 0.
\]
For $a\geqslant 0$, introduce the auxiliary quantity
\begin{equation*}
    Z_a
    :=
    \frac{\Delta u}{u}
    +
    a\frac{|\nabla u|^2}{u^2},
    \qquad
    X_a
    :=
    Z_a+b_na u^{\frac{4}{n-4}},
    \qquad
    b_n
    :=
    \frac{n-4}{2}
    \sqrt{\frac{n(n-4)}{n^2-4}}.
\end{equation*}
A direct differentiation gives
\begin{align*}
    \nabla Z_a
    ={}&
    -\frac{\Delta u}{u^2}\nabla u
    +\frac{1}{u}\nabla\Delta u
    -2a\frac{|\nabla u|^2}{u^3}\nabla u
    +2a\frac{1}{u^2}\nabla^2 u\cdot\nabla u.
\end{align*}
Differentiating once more, we obtain
\begin{align*}
    \Delta Z_a
    ={}&
    \frac{\Delta^2u}{u}
    -\frac{2}{u^2}
        \langle\nabla\Delta u,\nabla u\rangle
    +\Delta u
        \left(
        \frac{2|\nabla u|^2}{u^3}
        -\frac{\Delta u}{u^2}
        \right)
    \nonumber\\
    &+
    \frac{a}{u^2}\Delta|\nabla u|^2
    -\frac{4a}{u^3}
        \langle\nabla|\nabla u|^2,\nabla u\rangle
    +a|\nabla u|^2
        \left(
        \frac{6|\nabla u|^2}{u^4}
        -\frac{2\Delta u}{u^3}
        \right).
\end{align*}
Using the Bochner formula, we obtain
\begin{align*}
    \Delta Z_a
    ={}&
    \frac{\Delta^2u}{u}
    -\frac{(\Delta u)^2}{u^2}
    +\frac{2a-2}{u^2}
        \langle\nabla\Delta u,\nabla u\rangle
    +\frac{2-2a}{u^3}
        \Delta u|\nabla u|^2
    \nonumber\\
    &+
    \frac{2a}{u^2}|\nabla^2 u|^2
    +\frac{6a}{u^4}|\nabla u|^4
    -\frac{8a}{u^3}
        \langle\nabla^2 u,\nabla u\otimes\nabla u\rangle
    +\frac{2a}{u^2}\mathrm{Ric}(\nabla u,\nabla u).
\end{align*}
Consequently,
\begin{align*}
u^{2a-2}
\operatorname{div}
\left(u^{2-2a}\nabla Z_a\right)
={}&
\frac{\Delta^2u}{u}
-\frac{(\Delta u)^2}{u^2}
+\frac{2a(2a+1)}{u^4}|\nabla u|^4
+\frac{2a}{u^2}|\nabla^2 u|^2
\nonumber\\
&-
\frac{4a(a+1)}{u^3}
\langle\nabla^2 u,\nabla u\otimes\nabla u\rangle
+\frac{2a}{u^2}\mathrm{Ric}(\nabla u,\nabla u).
\end{align*}
Completing the square gives
\begin{align*}
u^{2a-2}
\operatorname{div}
\left(u^{2-2a}\nabla Z_a\right)
={}&
2a
\left|
\frac{\nabla^2 u}{u}
-(1+a)\frac{\nabla u\otimes\nabla u}{u^2}
-\frac1n
\left(
\frac{\Delta u}{u}
-(1+a)\frac{|\nabla u|^2}{u^2}
\right)g
\right|^2
\nonumber\\
&+
\frac{\Delta^2u}{u}
+2a\mathrm{Ric}\left(\frac{\nabla u}{u},\frac{\nabla u}{u}\right)
+\left(\frac{2a}{n}-1\right)
\frac{(\Delta u)^2}{u^2}
\nonumber\\
&-
\frac{4a(1+a)}{n}
\frac{\Delta u|\nabla u|^2}{u^3}
+
\frac{2a[(1+a)^2-na^2]}{n}
\frac{|\nabla u|^4}{u^4}.
\end{align*}
Since $\Delta^2u\geqslant0$ and $\mathrm{Ric}\geqslant0$, it follows that
\begin{align*}
u^{2a-2}
\operatorname{div}
\left(u^{2-2a}\nabla Z_a\right)
\geqslant{}&
\frac{2a-n}{n}Z_a^2
-\frac{2a(2+4a-n)}{n}
Z_a\frac{|\nabla u|^2}{u^2}
\nonumber\\
&+
\frac{a(2a+1)(4a+2-na)}{n}
\frac{|\nabla u|^4}{u^4}.
\tag{2.1}
\label{Z_a}
\end{align*}

\subsection{Key Lemma}

\begin{lemma}
Suppose that there exist $\delta\in\mathbb R_+$, depending only on $n$, and a sequence $\{R_m\}_{m=1}^\infty\subset\mathbb R_+$ with
$R_m\to+\infty$ such that, for every $m$, at some maximum point $x_m$ of the compactly supported function $\phi_m:=X\eta_m^4$,
\begin{equation*}
\operatorname{div}
\left(u^{2-2a}\nabla X\right)
\geqslant
\delta u^{2-2a}
\left(
X^2+\frac{|\nabla u|^4}{u^4}
\right).
\tag{2.2}
\label{X 1}
\end{equation*}
Then $X\leqslant0$.
\end{lemma}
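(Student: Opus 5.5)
We argue at the maximum point of $\phi_m=X\eta_m^4$, with $\eta_m$ the cut-off of \eqref{eta} attached to $R_m$ and $X=X_a$. Fix $m$ and write $R=R_m$, $\eta=\eta_m$, $x_m$ the maximum point. If $\phi_m(x_m)\leqslant0$ there is nothing to do on $B_{R}$. Otherwise $X(x_m)>0$, $x_m$ lies in the interior of $\operatorname{supp}\eta$, and the first- and second-order conditions hold at $x_m$:
\[
\nabla X=-4X\eta^{-1}\nabla\eta,
\qquad
\operatorname{div}\left(u^{2-2a}\nabla\phi_m\right)\leqslant0 .
\]
The second condition is legitimate because $u^{2-2a}>0$ and the operator $\operatorname{div}(u^{2-2a}\nabla\cdot)=u^{2-2a}\Delta+\langle\nabla u^{2-2a},\nabla\cdot\rangle$ is elliptic, so at a maximum the gradient term vanishes and the Laplacian term is nonpositive.

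We expand
\[
\operatorname{div}\left(u^{2-2a}\nabla(X\eta^4)\right)=\eta^4\operatorname{div}\left(u^{2-2a}\nabla X\right)+8\eta^3u^{2-2a}\langle\nabla X,\nabla\eta\rangle+X\operatorname{div}\left(u^{2-2a}\nabla\eta^4\right).
\]
Using the first-order condition, the middle term equals $-32X\eta^2u^{2-2a}|\nabla\eta|^2$. The last term is
\[
X u^{2-2a}\left(4\eta^3\Delta\eta+12\eta^2|\nabla\eta|^2\right)+(2-2a)Xu^{1-2a}\cdot4\eta^3\langle\nabla u,\nabla\eta\rangle .
\]
Divide by $u^{2-2a}$ and insert \eqref{X 1}. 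This gives at $x_m$
\[
\delta\eta^4\left(X^2+\frac{|\nabla u|^4}{u^4}\right)\leqslant CX\eta^2\left(|\nabla\eta|^2+\eta|\Delta\eta|\right)+CX\eta^3\frac{|\nabla u|}{u}|\nabla\eta| .
\]
By \eqref{eta}, the first term on the right is at most $CR^{-2}X\eta^2$. The cross term is the only delicate one, and it is exactly why \eqref{X 1} carries the $|\nabla u|^4/u^4$ term. By Young's inequality,
\[
X\eta^3\frac{|\nabla u|}{u}\frac{C}{R}\leqslant\frac{\delta}{2}\eta^4\frac{|\nabla u|^4}{u^4}+C\left(X\eta^{5/3}R^{-1}\right)^{4/3}\leqslant\frac{\delta}{2}\eta^4\frac{|\nabla u|^4}{u^4}+\frac{\delta}{4}\eta^4X^2+CR^{-4}.
\]
The second step applies Young's inequality again to $X^{4/3}\eta^{20/9}R^{-4/3}$ with exponents $3/2$ and $3$: it gives $X^2\eta^{10/3}\leqslant X^2\eta^4$ up to the powers of $\eta$, which need care. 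Let me redo it cleanly with $\phi=X\eta^4$: multiply the inequality by $\eta^4$ so everything is in terms of $\phi$. The target is
\[
\phi(x_m)^2\leqslant CR^{-2}\phi(x_m)+CR^{-4},
\]
using $\eta\leqslant1$ and absorbing the gradient term. This yields $\phi(x_m)\leqslant CR^{-2}$.

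Since $x_m$ is a maximum point of $\phi_m$ and $\eta_m\equiv1$ on $B_{R_m}$, we get $X\leqslant CR_m^{-2}$ on $B_{R_m}$ with $C=C(n,\delta)$ independent of $m$. Now fix any $x\in M$. Then $x\in B_{R_m}$ for all large $m$, and letting $m\to\infty$ gives $X(x)\leqslant0$.

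The main obstacle is the bookkeeping of the $\eta$-powers in the Young absorption of the cross term $X\eta^3|\nabla u||\nabla\eta|/u$. The exponent $4$ in $\phi_m=X\eta^4$ is chosen precisely so that, after multiplying through by $\eta^4$, every term can be written as a power of $\phi$ times nonnegative powers of $\eta$. The $a$-dependent constants, and the sign of $2-2a$, only affect $C$.
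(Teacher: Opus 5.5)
Your proposal is correct and is essentially the paper's proof. It uses the same first- and second-order conditions at the maximum of $X\eta_m^4$ and the same expansion, producing the $-20X\eta^2|\nabla\eta|^2$, $4X\eta^3\Delta\eta$ and $8(1-a)X\eta^3\langle\nabla u,\nabla\eta\rangle/u$ terms; the cross term is then absorbed into the $|\nabla u|^4/u^4$ part of (2.2) to reach $\phi_m(x_m)\leqslant CR_m^{-2}$. The only slip is in your first Young step, where $\eta^{5/3}$ should be $\eta^{2}$. With that fixed, Young's inequality with exponents $(2,4,4)$ applied to $\bigl(X\eta^2\bigr)\bigl(\eta\frac{|\nabla u|}{u}\bigr)R^{-1}$ gives $\varepsilon X^2\eta^4+\varepsilon\eta^4\frac{|\nabla u|^4}{u^4}+C_\varepsilon R^{-4}$ directly, exactly as in the paper, so the detour of multiplying through by $\eta^4$ is unnecessary (though it also works).
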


\begin{proof}
Fix $m\in\mathbb N^*$, for simplicity, denote $\eta_m$ by $\eta$, $\phi _m$ by $\phi$, and $x_m$ by $x_0$. Then at $x_0$,
\[
    \nabla \phi=0,
    \qquad
    \Delta \phi\leqslant0.
\]
Hence
\begin{equation}
    \eta^2\nabla X+4\eta X\nabla\eta=0.
    \tag{2.3}
    \label{X 2}
\end{equation}
Furthermore,
\begin{align*}
0
\geqslant{}&
u^{2a-2}
\operatorname{div}
\left[
u^{2-2a}\nabla(X\eta^4)
\right]
\nonumber\\
={}&
u^{2a-2}
\operatorname{div}
\left(u^{2-2a}\nabla X\right)\eta^4
+8\eta^3\langle\nabla\eta,\nabla X\rangle
\nonumber\\
&+
\frac{8(1-a)}{u}
X\eta^3\langle\nabla u,\nabla\eta\rangle
+X\Delta\eta^4.
\end{align*}
Using \eqref{X 1} and \eqref{X 2}, we obtain
\begin{align*}
0\geqslant{}&
\delta\eta^4
\left(
X^2+\frac{|\nabla u|^4}{u^4}
\right)
-20X\eta^2|\nabla\eta|^2
\nonumber\\
&+
\frac{8(1-a)}{u}
X\eta^3
\langle\nabla u,\nabla\eta\rangle
+4X\eta^3\Delta\eta.
\end{align*}
Suppose first that $X(x_0)>0$. By Young's inequality,
\begin{align*}
\delta\eta^4
\left(
X^2+\frac{|\nabla u|^4}{u^4}
\right)
\leqslant
C X
\left(
\eta^2|\nabla\eta|^2
+\eta^3|\nabla\eta|\frac{|\nabla u|}{u}
+\eta^3(-\Delta\eta)
\right).
\end{align*}
Therefore, by \eqref{eta},
\begin{align*}
\delta\eta^4
\left(
X^2+\frac{|\nabla u|^4}{u^4}
\right)
\leqslant&
C X
\left(
R_m^{-2}\eta^2
+R_m^{-1}\eta^3\frac{|\nabla u|}{u}
\right)
\nonumber\\
\leqslant&
\frac{\delta}{2}X^2\eta^4
+CR_m^{-4}
+\frac{\delta}{2}
\frac{|\nabla u|^4}{u^4}\eta^4.
\end{align*}
Consequently, at $x_0$,
\[
    \eta^4
    \left(
    X^2+\frac{|\nabla u|^4}{u^4}
    \right)
    \leqslant CR_m^{-4}.
\]
Thus,
\[
    \phi(x)
    \leqslant \phi(x_0)
    \leqslant X(x_0)\eta(x_0)^2
    \leqslant CR_m^{-2}.
\]
If $X(x_0)\leqslant0$, the same estimate is immediate. Letting
$m\to+\infty$ yields $X(x)\leqslant0$.
\end{proof}

\subsection{A Continuity Argument}

\begin{proposition}
Let $(M^n,g)$ be a complete, connected, non-compact Riemannian manifold,
$n\geqslant5$, satisfying $\mathrm{Ric}\geqslant0$. If $u\in \mathcal{C}^4(M,\mathbb R_+)$, satisfying $\Delta^2u\geqslant0$ and \eqref{Delta u}, then
\[
    h:=\Delta u+\frac{2}{n-4}\frac{|\nabla u|^2}{u}
    \leqslant0.
\]
\end{proposition}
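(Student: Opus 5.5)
The plan is a continuity argument in the parameter $a$. Note that $h=u\,Z_{a^*}$ with $a^*:=\frac{2}{n-4}$, so it suffices to show $Z_a\leqslant 0$ on $M$ for every $a\in[0,a^*)$ and then let $a\to a^*$. Let
\[
I:=\{a\in[0,a^*):\ Z_a\leqslant 0 \text{ on } M\}.
\]
\emph{Base point.} By the argument following \eqref{Delta u}, the hypotheses $\Delta^2u\geqslant0$ and \eqref{Delta u} give $\Delta u\leqslant 0$. Hence $Z_0=\Delta u/u\leqslant0$ and $0\in I$.

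\emph{Closedness.} $I$ is closed in $[0,a^*)$ because $Z_a$ depends continuously (in fact affinely) on $a$ pointwise.

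\emph{Openness.} Suppose $a\in I$ and take $a'\in(a,a^*)$ with $a'-a=:\tau$ small. Set $X:=Z_{a'}$; here there is no $b_n$ term, since only $\Delta^2u\geqslant0$ is assumed. Wherever $X>0$ we have
\[
0<X=Z_a+\tau\frac{|\nabla u|^2}{u^2}\leqslant\tau\frac{|\nabla u|^2}{u^2}.
\]
Insert this into \eqref{Z_a}, written for $a'$.
\begin{itemize}
\item The coefficient of $|\nabla u|^4/u^4$ is $\frac{a'(2a'+1)(4a'+2-na')}{n}=:c(a')>0$ for $0<a'<a^*$. It vanishes exactly at $a'=0$ and at $a'=a^*$, since $4a^*+2-na^*=0$.
\item The $Z^2$ term has coefficient $\frac{2a'-n}{n}\in[-1,0)$, so it is bounded below by $-\tau^2|\nabla u|^4/u^4$.
\item The mixed term $-\frac{2a'(2+4a'-n)}{n}X\frac{|\nabla u|^2}{u^2}$ is bounded below by $-C a'\tau|\nabla u|^4/u^4$ when its coefficient is negative, and is nonnegative otherwise.
\end{itemize}
Since $X^2\leqslant\tau^2|\nabla u|^4/u^4$, we get at every point where $X>0$
\[
u^{2a'-2}\operatorname{div}\left(u^{2-2a'}\nabla X\right)\geqslant\bigl(c(a')-\tau^2-Ca'\tau\bigr)\frac{|\nabla u|^4}{u^4}\geqslant\delta\left(X^2+\frac{|\nabla u|^4}{u^4}\right),
\]
provided $\tau^2+Ca'\tau\leqslant\frac12c(a')$.

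The Key Lemma only needs \eqref{X 1} at a maximum point $x_m$ of $X\eta_m^4$. Its proof already handles the case $X(x_m)\leqslant0$ directly. So this pointwise inequality on $\{X>0\}$ suffices, and the Key Lemma yields $Z_{a'}\leqslant0$.

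\emph{Step size (the main obstacle).} The step $\tau$ must be chosen carefully because $c$ degenerates at both ends. Near $a=0$ we have $c(a')\sim\frac{2a'}{n}$. Taking $a'=\tau$ small, the condition $\tau^2+C\tau^2\leqslant\frac{\tau}{n}$ holds, so $I$ contains some $[0,a_1]$ with $a_1>0$. On any compact interval $[a_1,a^*-\sigma]$, $c$ is bounded below by a positive constant. Hence a uniform step $\tau(\sigma)>0$ works there, and finitely many steps cover the interval. Consequently $I=[0,a^*)$.

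\emph{Conclusion.} The bound $\delta$ may deteriorate as $a\to a^*$, but this does not matter: for each fixed $x$, $Z_a(x)\to Z_{a^*}(x)$, so $Z_{a^*}\leqslant0$, i.e.\ $h\leqslant0$.
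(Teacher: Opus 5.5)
Your proposal is correct and follows essentially the paper's own argument: a continuity method in $a$ for $Z_a$, in which (2.1) is combined with the bound $0<Z_{a'}\leqslant\tau\frac{|\nabla u|^2}{u^2}$ on the positivity set to verify the hypothesis of Lemma 2.1 at the relevant maximum points. The differences are only cosmetic: you bound the mixed term directly rather than absorbing it by Young's inequality, and you reach $a=\frac{2}{n-4}$ by a pointwise limit instead of including that endpoint in $T$ and using closedness.
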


\begin{proof}
Define
\[
    T:=\{0\leqslant a\leqslant \frac{2}{n-4}: Z_a\leqslant0\}.
\]
Clearly $T$ is non-empty and closed, since $Z_a$ is continuous with respect to $a$. It remains to prove that $T$ is open. Suppose that $\frac{2}{n-4}\notin T$. Take $a_0\in T$ and
\[
    \varepsilon\in
    \left(
    0,\frac{2}{n-4}-a_0
    \right),
    \qquad
    a=a_0+\varepsilon.
\]
Since $Z_{a_0}\leqslant0$,
\[
    Z_a
    \leqslant
    \varepsilon\frac{|\nabla u|^2}{u^2}.
\]
Assume that there exists $q\in M$ satisfying $Z_a(q)>0$. Then for all large $R$, $Z_a(q)\eta(q)^4>0$. Let $x_0$ be a maximum point of $Z_a\eta^4$. Then $Z_a(x_0)>0$. From \eqref{Z_a} and Young's inequality,
\begin{align*}
u^{2a-2}
\operatorname{div}
\left(u^{2-2a}\nabla Z_a\right)
\geqslant{}&
-\left[
1+\frac{18na}{2-(n-4)a}
\right]Z_a^2
\nonumber
+
\frac{a[2-(n-4)a]}{2n}
\frac{|\nabla u|^4}{u^4}
\nonumber\\
\geqslant{}&
-\frac{36n}{2-(n-4)a}Z_a^2
+
\frac{a[2-(n-4)a]}{2n}
\frac{|\nabla u|^4}{u^4}.
\end{align*}
Set
\[
    C_1:=\frac{36n}{2-(n-4)a},
    \qquad
    C_2:=\frac{a[2-(n-4)a]}{2n},
    \qquad
    \delta:=\frac{C_2}{2}\in\mathbb{R}_+.
\]
Then at $x_0$,
\begin{align*}
u^{2a-2}
\operatorname{div}
\left(u^{2-2a}\nabla Z_a\right)
\geqslant{}&
\delta
\left(
Z_a^2+\frac{|\nabla u|^4}{u^4}
\right)
\nonumber
+
\left[
\delta-(C_1+\delta)\varepsilon^2
\right]
\frac{|\nabla u|^4}{u^4}\\
\geqslant{}&
\delta
\left(
Z_a^2+\frac{|\nabla u|^4}{u^4}
\right)
\nonumber
+
\varepsilon
\left[
\frac{9}{C_1}
-\left(
C_1+\frac{18}{C_1}
\right)\varepsilon
\right]
\frac{|\nabla u|^4}{u^4}.
\end{align*}
Since
\[
    \lim_{\varepsilon\to0^+}C_1
    =
    \frac{36n}{2-(n-4)a_0}\in\mathbb{R}_+,
\]
we may choose $\varepsilon\in\mathbb{R}_+$ sufficiently small, depending on $n$ and $a_0$, so that
\[
u^{2a-2}
\operatorname{div}
\left(u^{2-2a}\nabla Z_a\right)
\geqslant
\delta
\left(
Z_a^2+\frac{|\nabla u|^4}{u^4}
\right)
\]
at $x_0$ for all sufficiently large $R$. Then lemma 2.1 gives
$a\in T$, a contradiction. Hence $T$ is open, and therefore
$\frac2{n-4}\in T$.
\end{proof}

\begin{proposition}
Let $(M^n,g)$ be a complete, connected, non-compact Riemannian manifold with non-negative Ricci curvature and $n\geqslant5$. Let $u$ be a positive $\mathcal{C}^4$ function satisfying $\Delta^2u\geqslant u^{\frac{n+4}{n-4}}$ and \eqref{Delta u}. Then
\begin{equation*}
    \tilde h
    :=
    \Delta u
    +\frac{2}{n-4}\frac{|\nabla u|^2}{u}
    +\sqrt{\frac{n(n-4)}{n^2-4}}\,
      u^{\frac{n}{n-4}}
    \leqslant0.
\end{equation*}
\end{proposition}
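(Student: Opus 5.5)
The plan is to reduce the statement to Lemma 2.1 and run a second continuity argument, this time in the coefficient of the $u^{\frac{n}{n-4}}$ term. Fix $a=\frac{2}{n-4}$. Then $b_na=\sqrt{\frac{n(n-4)}{n^2-4}}$, so
\[
\tilde h=u\,X_a .
\]
For $t\in[0,1]$ I set $q:=\frac{4}{n-4}$ and
\[
X^{(t)}:=Z_a+t\,b_na\,u^{q},
\qquad
T':=\{t\in[0,1]:X^{(t)}\leqslant0\}.
\]
By Proposition 2.2 we have $Z_a=h/u\leqslant0$, so $0\in T'$. The set $T'$ is closed by continuity in $t$. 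It remains to show openness, i.e.\ that $t_0\in T'$ and $t=t_0+\varepsilon$ with $\varepsilon$ small imply $t\in T'$; then $1\in T'$, which is the claim.

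\textbf{Computing the differential inequality.} First I compute
\[
u^{2a-2}\operatorname{div}\left(u^{2-2a}\nabla u^q\right)
=q\,u^{q}\left[\frac{\Delta u}{u}+(q+1-2a)\frac{|\nabla u|^2}{u^2}\right].
\]
I add $t\,b_na$ times this to the identity preceding \eqref{Z_a}. The term $\Delta^2u/u$ is now kept and bounded from below by $u^{2q}$, using $\Delta^2u\geqslant u^{\frac{n+4}{n-4}}$; the curvature term and the trace-free Hessian square are dropped as in \eqref{Z_a}. Next I substitute
\[
\frac{\Delta u}{u}=X^{(t)}-t\,b_na\,u^q-a\frac{|\nabla u|^2}{u^2}.
\]
This yields a lower bound for $u^{2a-2}\operatorname{div}(u^{2-2a}\nabla X^{(t)})$ by a quadratic form $Q_t$ in the three quantities
\[
X^{(t)},\qquad \frac{|\nabla u|^2}{u^2},\qquad u^{q}.
\]
Since $4a+2-na=0$ at $a=\frac{2}{n-4}$, the pure $\frac{|\nabla u|^4}{u^4}$ coefficient inherited from \eqref{Z_a} vanishes. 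The needed positivity must therefore come from the cross terms $u^q\frac{|\nabla u|^2}{u^2}$ and the $u^{2q}$ terms. The $u^{2q}$ coefficient collects $1$ from $\Delta^2u/u$, $\left(\frac{2a}{n}-1\right)t^2b_n^2a^2$, and $-q\,t\,b_na\cdot t\,b_na$ from the new divergence term.

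\textbf{Main obstacle: positivity of $Q_t$.} The step I expect to be hardest is showing that, after setting $X^{(t)}=0$, the binary form in $\left(\frac{|\nabla u|^2}{u^2},u^q\right)$ is nonnegative for $t\leqslant1$ and strictly positive definite for $t<1$. The value of $b_n$ should be exactly the threshold at which its discriminant vanishes at $t=1$; verifying this is a direct but delicate algebraic check. I would first verify the discriminant identity, and then treat the terms linear in $X^{(t)}$ by Young's inequality, as in the proof of Proposition 2.2.

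\textbf{Openness.} At a maximum point of $X^{(t)}\eta^4$ where $X^{(t)}>0$, the hypothesis $X^{(t_0)}\leqslant0$ gives
\[
X^{(t)}\leqslant\varepsilon\,b_na\,u^q .
\]
Hence errors of order $\varepsilon$ are absorbed by the strict positivity available for $t<1$. This gives
\[
\operatorname{div}\left(u^{2-2a}\nabla X^{(t)}\right)\geqslant\delta\,u^{2-2a}\left(X^2+u^{2q}+\frac{|\nabla u|^4}{u^4}\right)
\]
for some $\delta>0$. If only $u^{2q}$ and $X^2$ are controlled rather than $\frac{|\nabla u|^4}{u^4}$, I would adapt the proof of Lemma 2.1. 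The gradient cross term $R^{-1}\eta^3 X\frac{|\nabla u|}{u}$ can be handled using $X^{(t)}\leqslant\varepsilon b_na u^q$ together with the nonnegative $|\nabla u|^2u^q$ terms of $Q_t$. Lemma 2.1 then yields $X^{(t)}\leqslant0$, i.e.\ $t\in T'$.

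\textbf{Endpoint $t=1$.} The degeneracy at $t=1$ is harmless, because the endpoint is obtained from $t<1$ by closedness of $T'$.
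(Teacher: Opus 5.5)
There is a genuine gap, and it is in the step you flagged as the main obstacle. Freezing $a=\frac{2}{n-4}$ puts you at the endpoint where the $|\nabla u|^4/u^4$ direction degenerates, and the positive cross terms you hope for do not exist. Carry out your substitution with $P:=|\nabla u|^2/u^2$, $V:=u^{4/(n-4)}$, $s:=t\,b_na$, $X:=X^{(t)}$ (so $q=2a$ and $q+1-2a=1$). One gets
\[
u^{2a-2}\operatorname{div}\bigl(u^{2-2a}\nabla X\bigr)\geqslant -\frac{n^2-4n-4}{n(n-4)}X^2+\frac{2(n^2-2n-4)}{n(n-4)}\,sXV+\frac{4(n-6)}{(n-4)^2}\,XP+(1-t^2)V^2 .
\]
The $P^2$ and $VP$ coefficients vanish identically for every $t$, not only at $t=1$. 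This agrees with the paper's inequality for $X_a$ at $a=\frac2{n-4}$. The role of $b_n$ is only that the $V^2$ coefficient is $1-t^2$; there is no discriminant. So the form is never positive definite in $(P,V)$, there are no nonnegative $|\nabla u|^2u^q$ terms, and no bound $\delta\bigl(X^2+u^{2q}+|\nabla u|^4/u^4\bigr)$ can come out.

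For $n=5$ this is fatal. The $XP$ coefficient is $-4$, and the cut-off term $\frac{8(1-a)}{u}X\eta^3\langle\nabla u,\nabla\eta\rangle$ from Lemma 2.1 is present, since $1-a=-1$. At a maximum point with $0<X\leqslant\varepsilon b_naV$, the right-hand side is about $-4XP<0$ whenever $P\gg V$. Nothing in the inequality controls $P$, so you get no contradiction. For $n=6$ both the $XP$ term and the cut-off gradient term vanish. For $n\geqslant7$ the $XP$ coefficient is positive and can absorb the gradient term. So your route survives for $n\geqslant6$, but by a different mechanism from the one you describe.

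The missing idea is that the continuity must run where $a<\frac2{n-4}$. There the $|\nabla u|^4/u^4$ coefficient $\frac{a(2a+1)[2-(n-4)a]}{n}$ is strictly positive. The paper does exactly this. It takes $T=\{a\in[0,\frac2{n-4}]:X_a\leqslant0\}$ and starts from $X_0=\Delta u/u\leqslant0$. It then moves $a$ and the $u^{4/(n-4)}$ coefficient $b_na$ together, so for $a<\frac2{n-4}$ the $P^2$, $VP$ and $V^2$ coefficients are all positive. This absorbs the bad $XP$ and $X^2$ terms via $X_a\leqslant\varepsilon(P+b_nV)$, and the degenerate endpoint is reached only through closedness.

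You could also repair your own scheme. Run your $t$-continuity for each fixed $a<\frac2{n-4}$, starting from $Z_a\leqslant Z_{2/(n-4)}\leqslant0$. Then let $a\uparrow\frac2{n-4}$ pointwise.
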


\begin{proof}
We use the same continuity argument, now applied to $X_a$.
A direct computation yields
\begin{align*}
u^{2a-2}
\operatorname{div}
\left(u^{2-2a}\nabla X_a\right)
\geqslant{}&
\frac{2a-n}{n}X_a^2
+
\frac{a(2a+1)[2-(n-4)a]}{n}
\frac{|\nabla u|^4}{u^4}
\nonumber\\
&-
\frac{2a(2+4a-n)}{n}
X_a\frac{|\nabla u|^2}{u^2}
\nonumber\\
&+
\frac{2}{n}\left(
\frac{2}{n-4}-a\right)
\left(\frac{n^2}{n-4}-4a
\right)b_na
u^{-\frac{2n-12}{n-4}}|\nabla u|^2
\nonumber\\
&+
\left(
-\frac{4a}{n}
+\frac{2n-4}{n-4}
\right)b_na
u^{\frac{4}{n-4}}X_a
\nonumber\\
&+
\left[
1+
\frac{2(n-4)a-n^2}{n(n-4)}
b_n^2a^2
\right]
u^{\frac{8}{n-4}}.
\end{align*}
At a maximum point $x_0$ of $X_a\eta^4$ for large $R$, the same argument gives
\begin{align*}
u^{2a-2}
\operatorname{div}
\left(u^{2-2a}\nabla X_a\right)
\geqslant{}&
\delta
\left(
X_a^2+\frac{|\nabla u|^4}{u^4}
\right)
\nonumber
+
\varepsilon
\left[
\frac{9}{C_1}
-2\left(
C_1+\frac{18}{C_1}
\right)\varepsilon
\right]
\frac{|\nabla u|^4}{u^4}
\nonumber\\
&+
\left[
1+
\frac{2(n-4)a-n^2}{n(n-4)}
b_n^2a^2
-2\left(
C_1+\frac{18}{C_1}
\right)\varepsilon^2b_n^2
\right]
u^{\frac{8}{n-4}}.
\end{align*}
As $\varepsilon\to0^+$,
\[
1+
\frac{2(n-4)a-n^2}{n(n-4)}
b_n^2a^2
-2\left(
C_1+\frac{18}{C_1}
\right)\varepsilon^2b_n^2
\nonumber\longrightarrow
1+
\frac{2(n-4)a_0-n^2}{n(n-4)}
b_n^2a_0^2.
\]
The right-hand side is strictly decreasing as a function of $a_0$ and equal to $0$ at $\frac2{n-4}$, so it is positive. Hence $\varepsilon$ can be chosen sufficiently small, depending on $n$ and $a_0$, so that at $x_0$,
\[
u^{2a-2}
\operatorname{div}
\left(u^{2-2a}\nabla X_a\right)
\geqslant
\delta
\left(
X_a^2+\frac{|\nabla u|^4}{u^4}
\right).
\]
Then lemma 2.1 gives $X_a\leqslant0$.
\end{proof}
\section{Invariant Tensor Method}

\subsection{Construction of the Invariant Tensors}

Let $b$ be a constant to be determined. We introduce the trace-free tensor
\begin{equation*}
E_{ij}
=
u_{ij}
+
b\frac{u_i u_j}{u}
-
\frac1n
\left(
\Delta u+b\frac{|\nabla u|^2}{u}
\right)g_{ij}.
\end{equation*}
Then
\[
    E_{ij}=E_{ji},
    \qquad
    g^{ij}E_{ij}=0.
\]
Define
\begin{equation*}
    E_i:=\frac{E_{ij}u^j}{u},
    \qquad
    E:=\frac{E_i u^i}{u}.
\end{equation*}
Furthermore, set
\begin{equation*}
F_i
=
(\Delta u)_{,i}
+
\frac{n+2}{n}b\frac{\Delta u}{u}u_i
-
b\left(
1+\frac{n-2}{n}b
\right)
\frac{|\nabla u|^2}{u^2}u_i,
\end{equation*}
and
\[
    F:=\frac{F_i u^i}{u}.
\]
A direct calculation yields
\begin{align*}
E_{ij,}^{\ \; \;i}
=
\frac{n-2}{n}bE_j
+\frac{n-1}{n}F_j
+R_{ij}u^i,
\end{align*}
and
\begin{align*}
F_{i,}^{\ i}
=
-2b\left(1+\frac{n-2}{n}b\right)E
+\frac{n+2}{n}bF+G,
\end{align*}
where
\[
G
={}
\Delta^2u
+\frac{n+2}{n}b\frac{(\Delta u)^2}{u}
-\frac{2(n+2)b(1+b)}{n}
\frac{\Delta u|\nabla u|^2}{u^2}
\nonumber
+
b(3b+2)
\left(
1+\frac{n-2}{n}b
\right)
\frac{|\nabla u|^4}{u^3}.
\]
Differentiating $G$ gives
\begin{align*}
G_{,i}
={}&
4b
\left\{
-\frac{n+2}{n}(1+b)\frac{h}{u}
+
\left(
b+\frac{n-2}{n-4}
\right)
\left[
\frac{3(n-2)}{n}b+\frac{2(n-1)}{n}
\right]
\frac{|\nabla u|^2}{u^2}
\right\}E_i
\nonumber\\
&+
\frac{2(n+2)}{n}b
\left[
\frac{h}{u}
-
\left(
b+\frac{n-2}{n-4}
\right)
\frac{|\nabla u|^2}{u^2}
\right]F_i
+
\frac{f'(u)}{f(u)}Gu_i
\nonumber\\
&-
\frac{n+2}{n}b
\left[
\frac{n+4}{n}(2b+1)
+
\frac{uf'(u)}{f(u)}
\right]
\frac{h^2}{u^2}u_i
\nonumber\\
&+
b\left(
b+\frac{n-2}{n-4}
\right)
\left\{
\left[
\frac{2(n-1)(n+4)}{n^2}(2b+1)
+
\frac{n+2}{n}\frac{uf'(u)}{f(u)}
\right]
\frac{2h|\nabla u|^2}{u^3}u_i
\right.
\nonumber\\
&\qquad-
\left[
\left(
\frac{(n-2)(7n-4)}{n^2}b
+
\frac{2(3n^3-14n^2+22n-8)}
{n(n-4)^2}
\right)(2b+1)
\right.
\nonumber\\
&\left.\left.\qquad
+
\left(
\frac{3(n-2)}{n}b
+
\frac{2(n^2-4n+6)}{n(n-4)}
\right)
\frac{uf'(u)}{f(u)}
\right]
\frac{|\nabla u|^4}{u^4}u_i
\right\}.
\end{align*}

\subsection{A Differential Inequality}

We now choose
\[
    b=-\frac{n-2}{n-4},
\]
and write
\begin{equation*}
    R_0:=\frac{R_{ij}u^iu^j}{u}\geqslant0.
\end{equation*}
Then
\begin{equation*}
E_{ij}
=
u_{ij}
-\frac{n-2}{n-4}\frac{u_i u_j}{u}
-\frac1n
\left(
\Delta u-\frac{n-2}{n-4}\frac{|\nabla u|^2}{u}
\right)g_{ij}.
\end{equation*}
The identities above reduce to
\begin{equation*}
E_{ij,}^{\ \; \;i}
=
-\frac{(n-2)^2}{n(n-4)}E_j
+\frac{n-1}{n}F_j
+R_{ij}u^i,
\tag{3.1}
\label{E_{ij,}^{i}}
\end{equation*}
and
\begin{equation*}
F_i
=
h_i-\frac{4}{n-4}E_i
-\frac{n}{n-4}\frac{h}{u}u_i.  
\end{equation*}
Furthermore,
\begin{equation*}
F_{i,}^{\ i}
=
-\frac{8(n-2)}{n(n-4)^2}E
-\frac{(n-2)(n+2)}{n(n-4)}F+G,    
\end{equation*}
where
\begin{equation*}
G
=
\Delta^2u
-
\frac{(n-2)(n+2)}{n(n-4)}
\frac{h^2}{u}.
\end{equation*}
Moreover,
\begin{equation*}
G_{,i}
={}
\frac{(n-2)(n+2)}{n(n-4)}
\left[
-\frac{8}{n-4}\frac{h}{u}E_i
-\frac{2h}{u}F_i
+
\left(
\frac{uf'(u)}{f(u)}
-\frac{n+4}{n-4}
\right)
\frac{h^2}{u^2}u_i
\right]
+
\frac{f'(u)}{f(u)}Gu_i.
\end{equation*}
For the critical nonlinearity, the invariant tensors possess the required differential invariance. Furthermore, \eqref{eq} and Proposition 2.3 give
\begin{equation*}
    G
    =
    \frac{(n-2)(n+2)}{n(n-4)}
    \frac{\tilde h}{u}
    \left(
    2\sqrt{ \frac{n(n-4)}{n^2-4}}u^{\frac{n}{n-4}}
    -\tilde h
    \right)
    \leqslant0.
    \tag{3.2}
    \label{G 1}
\end{equation*}

A further computation shows that when $\Delta^2u\geqslant u^{\frac{n+4}{n-4}}$, the non-positive smooth function $\tilde h$ satisfies the following second-order differential inequality
\begin{align*}
&\Delta \tilde h
- \frac{4}{n-4}\frac{\langle \nabla u,\nabla \tilde h\rangle}{u}
- \frac{(n-2)(n+2)}{n(n-4)}
\left(
\frac{n^2}{n^2-4}\Delta u
+2\sqrt{ \frac{n(n-4)}{n^2-4}}u^{\frac{n}{n-4}}
-\tilde h
\right)
\frac{\tilde h}{u}
\nonumber\\
&\geqslant{}
\Delta \tilde h
- \frac{4}{n-4}\frac{\langle \nabla u,\nabla \tilde h\rangle}{u}
- \frac{n}{n-4}\frac{\Delta u}{u}\tilde h
- G
=
\frac{4}{n-4}\left(
\frac{1}{u}E_{ij}E^{ij}+R_0
\right)
\geqslant{}0.    
\end{align*}
By Proposition 2.3 and the strong maximum principle, the set $\{\tilde h=0\}$ is open. On the other hand, since $\tilde h$ is continuous, the set $\{\tilde h=0\}$ is closed. Therefore, by the connectedness of the manifold, either $\tilde h<0$ everywhere or $\tilde h\equiv0$. In the case $\tilde h\equiv 0$, from the above second-order equation, we have $E_{ij}\equiv0$. Consequently,
\[
    F_i
    =
    \tilde h_{,i}
    -\frac{4}{n-4}E_i
    -\frac{n}{n-4}\frac{\tilde h}{u}u_i
    =0.
\]
From \eqref{E_{ij,}^{i}}, $R_{ij}u^i=0$. Set $v=u^{-\frac{2}{n-4}}$. Then
\begin{equation*}
    \nabla^2 v=\frac{\Delta v}{n}g,
    \qquad
    \mathrm{Ric}(dv)=0.
\end{equation*}
Taking the divergence and using the Bochner formula gives
\[
    d\Delta v=0.
\]
Thus $\Delta v\equiv C$ for some constant $C$, and therefore
\begin{equation*}
    \nabla^2 v=\frac{C}{n}g.
\end{equation*}
On the other hand, Proposition 2.3 implies
\[
    \Delta u
    <
    -\frac{2}{n-4}\frac{|\nabla u|^2}{u},
\]
and hence
\begin{align*}
C
=\Delta v
={}&
-\frac{2v}{n-4}
\left(
\frac{\Delta u}{u}
-\frac{n-2}{n-4}
\frac{|\nabla u|^2}{u^2}
\right)
>
\frac{2n}{(n-4)^2}
\frac{|\nabla u|^2}{u^2}v
\geqslant0.
\end{align*}
By Tashiro's theorem, $(M,g)$ is isometric to Euclidean space. So it remains to solve
\[
    \partial_i\partial_jv=\frac{C}{n}\delta_{ij}.
\]
For $i\neq j$, $\partial_i\partial_jv=0$, so $\partial_jv$ depends only on $x_j$. Moreover, $\partial_j^3v=0$. Hence
\[
    \partial_jv=\frac{C}{n}x_j+b_j,
\]
and consequently,
\begin{equation*}
    v=c|x|^2+\langle b,x\rangle+a,
    \qquad
    a\in\mathbb{R},\quad b\in\mathbb{R}^n,\quad c\in\mathbb{R}_+.
\end{equation*}
Completing the square,
\[
    v=c|x-x_0|^2+d.
\]
Since $v>0$, we have $d\in\mathbb{R}_+$. Thus
\begin{equation*}
    v=A\left(1+\lambda^2|x-x_0|^2\right),
    \qquad
    A,\lambda\in\mathbb{R}_+,\quad x_0\in\mathbb{R}^n.
\end{equation*}
Substitution into the critical equation gives
\[
    u(x)
    =
    \frac{
    \left[
    \lambda^2n(n-4)(n^2-4)
    \right]^{\frac{n-4}{8}}
    }
    {\left(
    1+\lambda|x-x_0|^2
    \right)^{\frac{n-4}{2}}}.
\]
Then we obtain the following corollary.
\begin{corollary}
Let $(M^n,g)$ be a complete, connected, non-compact Riemannian manifold with non-negative Ricci curvature and $n\geqslant5$. Let $u$ be a positive $\mathcal{C}^4$ function satisfying $\Delta^2u\geqslant u^{\frac{n+4}{n-4}}$, \eqref{Delta u} and that at some point $q\in M$,
\[
    \Delta u(q)
    \geqslant
    -\frac{2}{n-4}\frac{|\nabla u(q)|^2}{u(q)}
    -\sqrt{\frac{n(n-4)}{n^2-4}}\,
      {u(q)}^{\frac{n}{n-4}}.
\]
Then $(M^n,g)$ is isometric to
$(\mathbb{R}^n,g_E)$. Moreover, $u$ is of the form
\begin{equation*}
    u(x)
    =
    \frac{
    \left[\lambda^2 n(n-4)(n^2-4)\right]^{\frac{n-4}{8}}
    }
    {\left(1+\lambda|x-x_0|^2\right)^{\frac{n-4}{2}}},
    \qquad
    \lambda\in\mathbb{R}_+,\quad x_0\in\mathbb{R}^n.
\end{equation*}    
\end{corollary}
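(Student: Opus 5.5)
The corollary is essentially a repackaging of the rigidity argument just carried out, so the plan is to reduce it to the dichotomy $\tilde h<0$ everywhere or $\tilde h\equiv0$.

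\emph{Step 1: force $\tilde h(q)=0$.} The hypotheses $\Delta^2u\geqslant u^{\frac{n+4}{n-4}}$, $\mathrm{Ric}\geqslant0$ and \eqref{Delta u} are exactly those of Proposition 2.3, which gives $\tilde h\leqslant0$ on $M$. The assumed inequality at $q$ is precisely $\tilde h(q)\geqslant0$. Hence $\tilde h(q)=0$, and $q$ is an interior maximum point of $\tilde h$.

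\emph{Step 2: pass from a point to the whole manifold.} Use the second-order differential inequality for $\tilde h$ displayed above. It has the form
\[
L\tilde h:=\Delta\tilde h+\langle W,\nabla\tilde h\rangle+c(x)\tilde h\geqslant0,
\]
with $W=-\frac{4}{n-4}\frac{\nabla u}{u}$ and a bounded continuous zeroth-order coefficient $c$ on compact sets. Since $\tilde h\leqslant0$, the strong maximum principle applies with no sign condition on $c$: a nonpositive supersolution-type function attaining the value $0$ at an interior point vanishes in a neighborhood of that point. So $\{\tilde h=0\}$ is open. It is nonempty (it contains $q$) and closed, so connectedness of $M$ gives $\tilde h\equiv0$.

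\emph{Step 3: invoke the rigidity argument.} With $\tilde h\equiv0$, the left-hand side of the inequality vanishes. Hence $E_{ij}\equiv0$ and $R_0\equiv0$; the latter gives $R_{ij}u^i=0$ via \eqref{E_{ij,}^{i}} once $F_i=0$. Set $v=u^{-\frac{2}{n-4}}$. Then $\nabla^2v=\frac{\Delta v}{n}g$, and taking the divergence together with the Bochner formula gives $\Delta v\equiv C$. The strict inequality $\Delta u<-\frac{2}{n-4}\frac{|\nabla u|^2}{u}$ gives $C>0$. This last inequality holds because $\tilde h\equiv0$ forces $h=-\sqrt{\frac{n(n-4)}{n^2-4}}u^{\frac n{n-4}}<0$. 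Tashiro's theorem then yields $(M,g)\cong(\mathbb{R}^n,g_E)$. Integrating $\partial_i\partial_jv=\frac Cn\delta_{ij}$ gives $v=A(1+\lambda|x-x_0|^2)$.

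\emph{Step 4: fix the constant.} To obtain the stated formula, substitute $v$ into $\tilde h\equiv0$, or equivalently into $G=0$ combined with $\Delta^2u=u^{\frac{n+4}{n-4}}$. Note that $\tilde h\equiv0$ and \eqref{G 1} force $\Delta^2u=u^{\frac{n+4}{n-4}}$ via $G=\Delta^2u-\frac{(n-2)(n+2)}{n(n-4)}\frac{h^2}{u}=0$. This pins down $A$ in terms of $\lambda$.

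\emph{Main obstacle.} The only delicate point is Step 2: checking that the coefficients are locally bounded so that Hopf's strong maximum principle applies. This holds since $u>0$ is $\mathcal C^4$. One also needs $\tilde h\in\mathcal C^2$, which follows from $u\in\mathcal C^4$.
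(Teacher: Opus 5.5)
Your proposal is correct and follows the paper's own argument: Proposition 2.3 and the hypothesis at $q$ give $\tilde h(q)=0$, the strong maximum principle for the second-order inequality satisfied by $\tilde h$ gives $\tilde h\equiv0$, and then $E_{ij}\equiv0$, $F_i\equiv0$ and $R_{ij}u^i=0$ follow, after which Tashiro's theorem applied to $v=u^{-\frac{2}{n-4}}$ and integration of $\nabla^2v=\frac{C}{n}g$ yield the classification. One small slip is in Step 4: citing \eqref{G 1} to get $\Delta^2u=u^{\frac{n+4}{n-4}}$ is circular, since that identity assumes the equation, but this is harmless because the gap in the first inequality of the chain is exactly $\Delta^2u-u^{\frac{n+4}{n-4}}\geqslant0$ (so $\tilde h\equiv0$ forces equality), and substituting $v$ into $\tilde h\equiv0$ already fixes $A$ in any case.
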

To get required differential inequality, we first prove the following algebraic inequality.
\begin{lemma}
For the trace-free tensor $E_{ij}$ defined above,
\begin{equation*}
    \frac{|\nabla u|^2}{u^2}E_{ij}E^{ij}
    \geqslant
    \frac{n}{n-1}E_iE^i.
\end{equation*}
\end{lemma}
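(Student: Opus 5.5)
This is the standard sharp algebraic estimate for a symmetric trace-free matrix applied to a vector, and it holds pointwise, so I will argue at a fixed point in linear algebra. At any point where $\nabla u=0$ both sides vanish. Otherwise, set $e=\nabla u/|\nabla u|$ and write $A=(E_{ij})$, a symmetric trace-free endomorphism of $T_xM$ with respect to $g$. Then $E_i=|\nabla u|(Ae)_i/u$, so $E_iE^i=\frac{|\nabla u|^2}{u^2}|Ae|^2$. After cancelling the common factor $|\nabla u|^2/u^2$, the claim reduces to
\begin{equation*}
    |A|^2\geqslant\frac{n}{n-1}|Ae|^2
\end{equation*}
for every symmetric trace-free $A$ and every unit vector $e$.

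To prove this, I will choose an orthonormal frame $e_1=e,e_2,\dots,e_n$ and write $a_{ij}=\langle Ae_i,e_j\rangle$. Then $|Ae|^2=a_{11}^2+\sum_{k\geqslant2}a_{1k}^2$. By symmetry,
\begin{equation*}
    |A|^2=a_{11}^2+2\sum_{k\geqslant2}a_{1k}^2+\sum_{i,j\geqslant2}a_{ij}^2
    \geqslant a_{11}^2+2\sum_{k\geqslant2}a_{1k}^2+\sum_{i\geqslant2}a_{ii}^2.
\end{equation*}
Trace-freeness gives $\sum_{i\geqslant2}a_{ii}=-a_{11}$, so Cauchy--Schwarz over the $n-1$ indices yields $\sum_{i\geqslant2}a_{ii}^2\geqslant a_{11}^2/(n-1)$. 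Hence
\begin{equation*}
    |A|^2\geqslant\frac{n}{n-1}a_{11}^2+2\sum_{k\geqslant2}a_{1k}^2\geqslant\frac{n}{n-1}\Big(a_{11}^2+\sum_{k\geqslant2}a_{1k}^2\Big),
\end{equation*}
where the last step uses $2\geqslant\frac{n}{n-1}$, valid for $n\geqslant2$. This is exactly the reduced inequality, and multiplying back by $|\nabla u|^2/u^2$ gives the statement.

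No step here is a real obstacle. The only point needing care is keeping the normalization consistent with the paper's definition $E_i=E_{ij}u^j/u$, so that the factor $|\nabla u|^2/u^2$ on the left matches exactly.
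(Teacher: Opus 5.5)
Your proposal is correct and is essentially the paper's argument. In both, you reduce pointwise to $|A|^2\geqslant\frac{n}{n-1}|Ae_1|^2$ for a trace-free symmetric $A$, bound $a_{11}^2$ by the remaining diagonal entries via trace-freeness and Cauchy--Schwarz, and absorb the mixed terms $a_{1k}$ using $2\geqslant\frac{n}{n-1}$. The only cosmetic differences are that the paper first diagonalizes the lower $(n-1)\times(n-1)$ block and writes Cauchy--Schwarz as $\frac{1}{n-1}\sum_{2\leqslant i<j\leqslant n}(A_{ii}-A_{jj})^2\geqslant0$, whereas you simply discard that block's off-diagonal entries, which is equally valid.
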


\begin{proof}
At any fixed point, choose the normal coordinate system. Since $\sum_{i=1}^nE_{ii}=0$, it suffices to prove that for every symmetric matrix $A\in\operatorname{Sym}_n(\mathbb{R})$ with $\operatorname{tr} A=0$ and every
$x\in\mathbb{R}^n$,
\begin{equation*}
    |x|^2\|A\|_F^2
    \geqslant
    \frac{n}{n-1}|Ax|^2.
\end{equation*}
The case $x=0$ is trivial. If $x\neq0$, by homogeneity we may assume
$|x|=1$, and after an orthogonal change of coordinates we may assume
$x=(1,0,\ldots,0)^T$, and
\[
    A_{ij} = 0,
    \qquad
    2 \leqslant i \not = j \leqslant n.
\]
Thus
\begin{align*}
|x|^2\|A\|_F^2-\frac{n}{n-1}|Ax|^2
={}&
\sum_{i=1}^n {A_{ii}}^2 + 2\sum_{i=2}^n {A_{1i}}^2 -  \frac{n}{n-1}\sum_{i=1}^n {A_{1i}}^2 
\\
={}&
\sum_{i=2}^nA_{ii}^2
+\frac{n-2}{n-1}\sum_{i=2}^nA_{1i}^2
-\frac1{n-1}A_{11}^2
\\
\geqslant{}&
\sum_{i=2}^nA_{ii}^2
-\frac1{n-1}
\left(\sum_{i=2}^nA_{ii}\right)^2
\\
={}&
\frac1{n-1}
\sum_{2\leqslant i<j\leqslant n}
(A_{ii}-A_{jj})^2
\geqslant0.
\end{align*}
\end{proof}
Now we can derive the desired differential inequality. Let $\beta$ be a constant to be chosen. Direct calculations yield
\begin{align*}
u^\beta
\left(
u^{-\beta}\tilde h E_i
\right)_{,}^{\ i}
={}&
\tilde h
\left(
\frac1u E_{ij}E^{ij}+R_0
\right)
+\frac{4}{n-4}E_iE^i
+E_iF^i
\nonumber\\
&+
\left[
\frac{2(3n-2)}{n(n-4)}-\beta
\right]\tilde h E
+
\frac{n-1}{n}\tilde h F,
\tag{3.3}
\label{3.3}
\end{align*}
\begin{align*}
u^\beta
\left(
u^{-\beta-1}|\nabla u|^2E_i
\right)_{,}^{\ i}
={}&
\frac{|\nabla u|^2}{u}
\left(
\frac1u E_{ij}E^{ij}+R_0
\right)
+2E_iE^i
+\frac{2}{n}h E
\nonumber\\
&+
\left[
\frac{4(n-1)}{n(n-4)}-\beta
\right]
\frac{|\nabla u|^2}{u}E
+
\frac{n-1}{n}\frac{|\nabla u|^2}{u}F,
\tag{3.4}
\label{3.4}
\end{align*}
\begin{align*}
u^\beta
\left(
u^{-\beta+\frac{n}{n-4}}E_i
\right)_{,}^{\ i}
={}&
u^{\frac{n}{n-4}}
\left\{
\frac1uE_{ij}E^{ij}+R_0
+
\left[
\frac{2(3n-2)}{n(n-4)}-\beta
\right]E
+\frac{n-1}{n}F
\right\},
\tag{3.5}
\label{3.5}
\end{align*}
\begin{align*}
u^\beta
\left(
u^{-\beta}\tilde h F_i
\right)_{,}^{\ i}
={}&
\frac{4}{n-4}E_iF^i
+F_iF^i
-\frac{8(n-2)}{n(n-4)^2}\tilde h E
+
\left[
\frac{4}{n(n-4)}-\beta
\right]\tilde h F
+\tilde h G,
\tag{3.6}
\label{3.6}
\end{align*}
\begin{align*}
u^\beta
\left(
u^{-\beta-1}|\nabla u|^2F_i
\right)_{,}^{\ i}
={}&
2E_iF^i
-\frac{8(n-2)}{n(n-4)^2}
\frac{|\nabla u|^2}{u}E
+\frac{2}{n}hF
\nonumber\\
&+
\left[
-\frac{2(n-2)}{n(n-4)}-\beta
\right]
\frac{|\nabla u|^2}{u}F
+
\frac{|\nabla u|^2}{u}G,
\tag{3.7}
\label{3.7}
\end{align*}
\begin{align*}
u^\beta
\left(
u^{-\beta+\frac{n}{n-4}}F_i
\right)_{,}^{\ i}
=
u^{\frac{n}{n-4}}
\left\{
-\frac{8(n-2)}{n(n-4)^2}E
+
\left[
\frac{4}{n(n-4)}-\beta
\right]F
+G
\right\},
\tag{3.8}
\label{3.8}
\end{align*}
and
\begin{align*}
u^\beta(u^{-\beta}Gu_i)_{,}^{\ i}
={}&
\frac{(n-2)(n+2)}{n(n-4)}
\frac{h}{u}
\left[
-\frac8{n-4}uE
-2uF
+
\left(
\frac{uf'(u)}{f(u)}
-\frac{n+4}{n-4}
\right)
\frac{h|\nabla u|^2}{u}
\right]
\nonumber\\
&+
\left\{
h+
\left[
\frac{uf'(u)}{f(u)}
-\left(
\frac2{n-4}+\beta
\right)
\right]
\frac{|\nabla u|^2}{u}
\right\}G.
\tag{3.9}
\label{3.9}
\end{align*}
We set
\begin{equation*}
    c_n
    :=
    \sqrt{\frac{n(n-4)}{n^2-4}}.
\end{equation*}
Taking a suitable linear combination of \eqref{3.3}--\eqref{3.9},
and using Proposition 2.3 together with Lemma 3.2, we obtain a constant
$\delta\in\mathbb{R}_+$, depending only on $n$, such that
\begin{align*}
&u^\beta
\left\{
u^{-\beta}
\left[
\left(
c_1\tilde h
+c_2\frac{|\nabla u|^2}{u}
+c_3u^{\frac{n}{n-4}}
\right)E_i
+
\left(
c_4\tilde h
+c_5\frac{|\nabla u|^2}{u}
+c_6u^{\frac{n}{n-4}}
\right)F_i
+c_7Gu_i
\right]
\right\}_{,}^{\ i}
\nonumber\\
&\geqslant
\frac{2(n^2-3)c_n}
{(n-1)(n-4)}
u^{\frac{n}{n-4}}
\left(
\frac1uE_{ij}E^{ij}+R_0
\right)
\nonumber\\
&\quad
+
\frac{4(n^2-5n+8)}
{(n-1)(n-4)^2}E_iE^i
+
\frac{4}{(n-1)(n-4)}E_iF^i
+F_iF^i
-\frac1{n-4}\frac{|\nabla u|^2}{u}G
\nonumber\\
&\geqslant
\delta
\left(
u^{\frac4{n-4}}E_{ij}E^{ij}
+E_iE^i
+F_iF^i
-\frac{|\nabla u|^2}{u}G
\right),
\tag{3.10}
\label{3.10}
\end{align*}
where
\begin{align*}
c_1&=-\frac{2(n^2-3)}{(n-1)(n-4)},
&
c_2&=\frac{4(n-1)}{(n-4)^2},
&
c_3&=\frac{2(n^2-3)c_n}{(n-1)(n-4)},
&
c_4&=1,
\nonumber\\
c_5&=\frac{n-1}{n-4},
&
c_6&=-c_n,
&
c_7&=-1,
&
\beta&=\frac{2}{n-4}.
\end{align*}
\section{Integral Estimates and Proofs of the Main Theorems}

\subsection{Integral Estimate}

Let $\gamma>6$. Multiplying \eqref{3.10} by
$u^{-\frac{2}{n-4}}\eta^\gamma$, integrating by parts and using \eqref{eta}, we obtain
\begin{align*}
\Delta
:={}&
\int_M
\left[
u^{\frac{2}{n-4}}E_{ij}E^{ij}
+
u^{-\frac{2}{n-4}}
(E_iE^i+F_iF^i)
-
u^{- \frac{2}{n-4}-1}
|\nabla u|^2G
\right]\eta^\gamma
\nonumber\\
\leqslant{}&
C\int_M
u^{-\frac{2}{n-4}}
\left[
\left(
-\tilde h
+\frac{|\nabla u|^2}{u}
+u^{\frac{n}{n-4}}
\right)
\left(
|E_i\eta^i|+|F_i\eta^i|
\right)
-G|u_i\eta^i|
\right]\eta^{\gamma-1}
\nonumber\\
\leqslant{}&
\frac12\Delta
+
CR^{-2}
\int_M
u^{-\frac{2}{n-4}}
\left(
\tilde h^2
+\frac{|\nabla u|^4}{u^2}
+u^{\frac{2n}{n-4}}
\right)
\eta^{\gamma-2}
\nonumber\\
&+
CR^{-2}
\int_M
u^{-\frac{2}{n-4}+1}
(-G)\eta^{\gamma-2}.
\end{align*}
Using \eqref{G 1}, Proposition 2.2 and Proposition 2.3,
\begin{align*}
\Delta
\leqslant{}&
CR^{-2}
\int_M
u^{-\frac{2}{n-4}}
\left(
\tilde h^2
+\frac{|\nabla u|^4}{u^2}
+u^{\frac{2n}{n-4}}
\right)
\eta^{\gamma-2}
\nonumber\\
\leqslant{}&
CR^{-2}
\int_M
u^{-\frac{2}{n-4}}
\left(
h^2+\frac{|\nabla u|^4}{u^2}
\right)
\eta^{\gamma-2}
\leqslant{}
C_0R^{-2}
\int_M
u^{-\frac{2}{n-4}}
(\Delta u)^2
\eta^{\gamma-2}.
\tag{4.1}
\label{Delta}
\end{align*}
Moreover, the Bishop--Gromov volume comparison theorem gives
\begin{equation*}
    \operatorname{Vol}(B_R)\leqslant\omega_nR^n,
\tag{4.2}
\label{vol}
\end{equation*}
since $(M^n,g)$ is complete and $\mathrm{Ric}\geqslant0$.

\subsection{Proof of Theorem 1.1: The Case \texorpdfstring{$n=5$}{n}}

Consider the differential identity
\begin{align*}
u^{\frac{2}{n-4}}
\left(
u^{-\frac{2}{n-4}}\Delta u\,u_i
\right)_,^{\ i}
={}&
(\Delta u)^2+uF
+
\frac{n^2-2n-4}{n(n-4)}
\frac{\Delta u|\nabla u|^2}{u}
+
\frac{4(n-2)}{n(n-4)^2}
\frac{|\nabla u|^4}{u^2}.
\end{align*}
For $n=5$ this becomes
\begin{equation*}
u^2
\left(
u^{-2}\Delta u\,u_i
\right)_,^{\ i}
=
(\Delta u)^2+uF
+\frac{11}{5}
\frac{\Delta u|\nabla u|^2}{u}
+\frac{12}{5}
\frac{|\nabla u|^4}{u^2}.
\end{equation*}
Hence, for some $\delta_0\in\mathbb{R}_+$ ,
\begin{equation*}
u^2
\left(
u^{-2}\Delta u\,u_i
\right)_,^{\ i}
\geqslant
\delta_0
\left[
(\Delta u)^2
+\frac{|\nabla u|^4}{u^2}
\right]
-|F_i u^i|.
\end{equation*}
Integrating by parts and using \eqref{eta},
\begin{align*}
&\int_M
u^{-2}
\left[
(\Delta u)^2+\frac{|\nabla u|^4}{u^2}
\right]\eta^{\gamma-2}
\nonumber\\
&\leqslant{}
C\int_M
u^{-2}(-\Delta u)|\nabla u||\nabla\eta|
\eta^{\gamma-3}
+
C\int_Mu^{-2}|F_iu^i|\eta^{\gamma-2}
\nonumber\\
&\leqslant{}
\frac12
\int_Mu^{-2}(\Delta u)^2\eta^{\gamma-2}
+
CR^{-2}
\int_Mu^{-2}|\nabla u|^2\eta^{\gamma-4}
+
C\int_Mu^{-2}|F_iu^i|\eta^{\gamma-2}
\nonumber\\
&\leqslant{}
\frac12 
\int_M
u^{-2}
\left[
(\Delta u)^2+\frac{|\nabla u|^4}{u^2}
\right]\eta^{\gamma-2}
+
CR^{-4}\int_M\eta^{\gamma-6}
+
C\int_Mu^{-2}|F_iu^i|\eta^{\gamma-2}.
\end{align*}
Then by Young's inequality,
\begin{align*}
R^{-2}
\int_M
u^{-2}
\left[
(\Delta u)^2+\frac{|\nabla u|^4}{u^2}
\right]\eta^{\gamma-2}
\leqslant{}&
CR^{-6}\int_M\eta^{\gamma-6}
+
CR^{-2}
\int_Mu^{-2}|F_iu^i|\eta^{\gamma-2}.
\end{align*}
Consequently,
\begin{align*}
R^{-2}
\int_M
u^{-2}
\left[
(\Delta u)^2+\frac{|\nabla u|^4}{u^2}
\right]\eta^{\gamma-2}
\leqslant{}&
\varepsilon\Delta
+
\frac12R^{-2}
\int_M
\frac{|\nabla u|^4}{u^4}\eta^{\gamma-2}
+
CR^{-6}
\int_M\eta^{\gamma-6}.
\end{align*}
Therefore,
\begin{align*}
R^{-2}
\int_M
u^{-2}
\left[
(\Delta u)^2+\frac{|\nabla u|^4}{u^2}
\right]\eta^{\gamma-2}
\leqslant
2\varepsilon\Delta
+
CR^{-6}
\int_M\eta^{\gamma-6}.
\end{align*}
We may choose $\varepsilon\in\mathbb{R}_+$ sufficiently small, depending on $C_0$, and combine \eqref{Delta}, so that
\begin{align*}
\Delta
\leqslant
C_0R^{-2}
\int_Mu^{-2}(\Delta u)^2\eta^{\gamma-2}
\leqslant
\frac12\Delta
+
CR^{-6}\int_M\eta^{\gamma-6}.
\end{align*}
Thus by \eqref{vol},
\begin{align*}
\int_{B_R}
\left(
u^2E_{ij}E^{ij}+u^{-2}F_iF^i
\right)
\leqslant
\Delta
\leqslant
CR^{-6}\int_M\eta^{\gamma-6}
\leqslant CR^{-1}.
\end{align*}
Letting $R\to+\infty$, we obtain $E_{ij}\equiv0$ and $F_i\equiv0$. The rigidity argument in the subsection 3.2 then yields the required classification.

\subsection{Proof of Theorem 1.2: The case \texorpdfstring{$n=6$}{n}}

Integration by parts and using \eqref{eq}, \eqref{eta} gives
\begin{align*}
R^{-2}
\int_M
\frac{(\Delta u)^2}{u}\eta^{\gamma-2}
\leqslant{}&
R^{-2}
\int_M
\frac{\Delta u|\nabla u|^2}{u^2}\eta^{\gamma-2}
+
CR^{-3}
\int_M
\frac{(-\Delta u)|\nabla u|}{u}\eta^{\gamma-3}
\nonumber\\
&-
R^{-2}
\int_M
(\Delta u)_{,i}(\ln u)_{,}^{\ i}\eta^{\gamma-2}
\nonumber\\
\leqslant{}&
R^{-2}
\int_M
u^5\ln u\,\eta^{\gamma-2}
+
CR^{-3}
\int_M
\frac{(-\Delta u)|\nabla u|}{u}\eta^{\gamma-3}
\nonumber\\
&+
(\gamma-2)R^{-2}
\int_M
\ln u\,\eta^{\gamma-3}\eta_i(\Delta u)_{,}^{\ i}.
\end{align*}
After another integration by parts,
\begin{align*}
R^{-2}
\int_M
\frac{(\Delta u)^2}{u}\eta^{\gamma-2}
\leqslant{}&
R^{-2}
\int_M
u^5\ln u\,\eta^{\gamma-2}
+
CR^{-3}
\int_M
\frac{(-\Delta u)|\nabla u|}{u}\eta^{\gamma-3}
\nonumber\\
&+
CR^{-4}
\int_M
|\Delta u\ln u|\eta^{\gamma-4}.
\end{align*}
Using Proposition 2.2 and Young's inequality,
\begin{align*}
R^{-2}
\int_M
\frac{(\Delta u)^2}{u}\eta^{\gamma-2}
\leqslant{}&
R^{-2}
\int_M
u^5\ln u\,\eta^{\gamma-2}
+
CR^{-3}
\int_M
u^{-\frac12}(-\Delta u)^{\frac32}\eta^{\gamma-3}
\nonumber\\
&+
CR^{-4}
\int_M
|\Delta u\ln u|\eta^{\gamma-4}
\nonumber\\
\leqslant{}&
R^{-2}
\int_Mu^5\ln u\,\eta^{\gamma-2}
+\frac12R^{-2}
\int_M\frac{(\Delta u)^2}{u}\eta^{\gamma-2}
\nonumber\\
&+
CR^{-6}\int_Mu\eta^{\gamma-6}
+
CR^{-6}\int_Mu(\ln u)^2\eta^{\gamma-6}.
\end{align*}
Thus we have
\begin{align*}
R^{-2}
\int_M
\frac{(\Delta u)^2}{u}\eta^{\gamma-2}
&\leqslant{}
2R^{-2}
\int_Mu^5\ln u\eta^{\gamma-2}
+
CR^{-6}
\int_Mu\eta^{\gamma-6}
+
CR^{-6}
\int_Mu(\ln u)^2\eta^{\gamma-6}
\nonumber\\
&\leqslant{}
2R^{-2}
\int_Mu^5\ln u\eta^{\gamma-2}
+
CR^{-6}
\int_M
\left(
u^{\frac12}
+
u^{\frac32}
\right)\eta^{\gamma-6}.
\end{align*}
Combining this estimate with \eqref{Delta}, we obtain
\begin{equation*}
    \Delta
    \leqslant{}
    2C_0R^{-2}
    \int_Mu^5\ln u\eta^{\gamma-2}
    +
    CR^{-6}
    \int_M
    \left(
    u^{\frac12}
    +
    u^{\frac32}
    \right)\eta^{\gamma-6}.
\tag{4.3}
\label{n=6}
\end{equation*}
Then by \eqref{vol} and Hölder's inequality,
\begin{align*}
\Delta
\leqslant{}&
CR^{-2}
\int_Mu^6
+
CR^{-\frac12}
\left(
\int_Mu^6
\right)^{\frac1{12}}
+
CR^{-\frac32}
\left(
\int_Mu^6
\right)^{\frac14}.
\end{align*}
Since $u\in \mathcal{L}^6(M)$, letting $R\to+\infty$ gives $E_{ij}\equiv0$ and $F_i\equiv0$.

\subsection{An Alternative Growth Condition for \texorpdfstring{$n=6$}{n}}

For every $n\geqslant5$, using \eqref{eq}, \eqref{eta}, $\Delta u\leqslant0$, and Young's inequality,
\begin{align*}
\int_M
u^{\frac{n+4}{n-4}}\eta^{n+4}
&=
\int_M\Delta u\,\Delta(\eta^{n+4})
\nonumber\\
&\leqslant
CR^{-2}
\int_M(-\Delta u)\eta^{n+2}
=
CR^{-2}
\int_M-u\Delta(\eta^{n+2})
\nonumber\\
&\leqslant
CR^{-4}
\int_Mu\eta^n
\nonumber\\
&\leqslant
\frac12
\int_M
u^{\frac{n+4}{n-4}}\eta^{n+4}
+
CR^{-\frac{n+4}{2}}
\int_M\eta^{\frac{(n+4)}2}.
\end{align*}
Thus by \eqref{vol},
\begin{equation*}
    \int_{B_R}
    u^{\frac{n+4}{n-4}}
    \leqslant
    CR^{\frac n2-2}.
\tag{4.4}
\label{n+4/n-4}
\end{equation*}
In particular, when $n=6$,
\begin{equation*}
    \int_{B_R}u^5\leqslant CR.
\tag{4.5}
\label{5}
\end{equation*}
Suppose there exists $p\in M$ such that for large $R$,
\begin{equation*}
    \sup_{B_R}\ln u=O(R),
    \tag{4.6}
    \label{growth 1}
\end{equation*}
Let $\eta$ be the standard cut-off function associated with $\frac R2$. Then using \eqref{vol}, \eqref{n=6}, \eqref{5}, \eqref{growth 1} and Hölder's inequality, for large $R$,
\begin{align*}
\Delta
\leqslant{}&
2C_0R^{-2}
\int_{B_R}
u^5\ln u\,\eta^{\gamma-2}
+
CR^{-6}
\int_{B_R}
\left(
u^{\frac12}+u^{\frac32}
\right)
\eta^{\gamma-6}
\nonumber\\
\leqslant{}&
2C_0R^{-2}
\left(
\sup_{B_R}\ln u
\right)
\int_{B_R}u^5
+
CR^{-\frac35}
\left(
\int_{B_R}u^5
\right)^{\frac1{10}}+
CR^{-\frac95}
\left(
\int_{B_R}u^5
\right)^{\frac3{10}}
\nonumber\\
\leqslant{}&
CR^{-1}
\int_{B_R}u^5
+
CR^{-\frac12}
+
CR^{-\frac32}
\leqslant{}
C.
\end{align*}
Consequently,
\begin{equation*}
    I_R
    :=
    \int_{B_{R/2}}
    u^{-1}
    (E_iE^i+F_iF^i)
    -
    u^{-2}
    |\nabla u|^2G
    \leqslant
    C.
\end{equation*}
Now we can use the Caccioppoli trick. Since $I_R$ is bounded and increasing with respect to $R$, the limit exists as $R\to+\infty$, which is denoted by $I$. Clearly, we have
\[
    I=
    \int_M
    u^{-1}
    (E_iE^i+F_iF^i)
    -
    u^{-2}
    |\nabla u|^2G
    \leqslant{}
    C. 
\]
By \eqref{eta} and \eqref{G 1}, for any $A\in\mathbb{R}_+$,
\begin{align*}
\Delta&
\leqslant{}
C\int_{B_R\setminus B_{R/2}}
u^{-1}
\left[
\left(
-\tilde h
+\frac{|\nabla u|^2}{u}
+u^3
\right)
\left(
|E_i\eta^i|+|F_i\eta^i|
\right)
-G|u_i\eta^i|
\right]\eta^{\gamma-1}
\nonumber\\
&\leqslant{}
A\int_{B_R\setminus B_{R/2}}
\left[
u^{-1}
(E_iE^i+F_iF^i)
-
u^{-2}
|\nabla u|^2G
\right]\eta^\gamma
\nonumber\\
&
\quad
+
CA^{-1}R^{-2}
\int_{B_R\setminus B_{R/2}}
u^{-1}
\left(
\tilde h^2
+\frac{|\nabla u|^4}{u^2}
+u^6
\right)
\eta^{\gamma-2}
\nonumber\\
&\leqslant{}
A\left(I-I_R\right)
+
CA^{-1}R^{-2}
\int_M
\frac{(\Delta u)^2}{u}
\eta^{\gamma-2}.
\end{align*}
We have proved
\begin{align*}
R^{-2}
\int_M
\frac{(\Delta u)^2}{u}\eta^{\gamma-2}
&\leqslant{}
2R^{-2}
\int_Mu^5\ln u\eta^{\gamma-2}
+
CR^{-6}
\int_M
\left(
u^{\frac12}
+
u^{\frac32}
\right)\eta^{\gamma-6}
\leqslant{}
C.
\end{align*}
Thus
\[
    \int_{B_{R/2}}
    uE_{ij}E^{ij}
    +
    u^{-1}F_iF^i
    \leqslant{}
    \Delta
    \leqslant{}
    A(I-I_R)
    +
    CA^{-1}.
\]
Then letting $R\to+\infty$ gives
\[
    \int_M
    uE_{ij}E^{ij}
    +
    u^{-1}F_iF^i
    \leqslant{}
    CA^{-1}.
\]
Letting $A\to+\infty$ gives $E_{ij}\equiv0$ and $F_i\equiv0$. Finally, the rigidity argument in the subsection 3.2 yields the required classification. Furthermore, by the Caccioppoli trick, it is easy to check that it suffices to assume that
\[
\int_{B_R} u^6 = O(R^2),
\]
in the subsection 4.3 for some $p\in M$ and large $R$.

\subsection{Proof of Theorem 1.2: The Case \texorpdfstring{$n\geqslant7$}{n}}

Consider the differential identity
\begin{align*}
u^{\frac{2}{n-4}}
\left(
u^{-\frac{2}{n-4}-1}|\nabla u|^2u_i
\right)_{,}^{\ i}
={}&
\frac{n-6}{n-4}\frac{|\nabla u|^4}{u^2}
+
\frac{n+2}{n}
\frac{h|\nabla u|^2}{u}
+2uE.
\end{align*}
For $n\geqslant7$, integration by parts yields
\begin{align*}
\int_M
u^{-\frac{2}{n-4}}
\frac{|\nabla u|^4}{u^2}
\eta^{\gamma-2}
\leqslant{}&
C
\int_M
u^{-\frac{2}{n-4}}
|E_iu^i|\eta^{\gamma-2}
+
C
\int_M
u^{-\frac{2}{n-4}}h^2\eta^{\gamma-2}
\nonumber\\
&+
C
\int_M
u^{-\frac{2}{n-4}-1}
|\nabla u|^3|\nabla\eta|
\eta^{\gamma-3}.
\end{align*}
By \eqref{eta} and Young's inequality,
\begin{align*}
\int_M
u^{-\frac{2}{n-4}}
\frac{|\nabla u|^4}{u^2}
\eta^{\gamma-2}
\leqslant{}&
C
\int_M
u^{-\frac{2}{n-4}}
|E_iu^i|\eta^{\gamma-2}
+
C
\int_M
u^{-\frac{2}{n-4}}h^2\eta^{\gamma-2}
\nonumber\\
&+
\frac12
\int_M
u^{-\frac{2}{n-4}}
\frac{|\nabla u|^4}{u^2}
\eta^{\gamma-2}
+
CR^{-2}
\int_M
u^{-\frac{2}{n-4}}
|\nabla u|^2\eta^{\gamma-4}.
\end{align*}
Therefore,
\begin{align*}
\int_M
u^{-\frac{2}{n-4}}
\frac{|\nabla u|^4}{u^2}
\eta^{\gamma-2}
\leqslant{}&
C
\int_M
u^{-\frac{2}{n-4}}
|E_iu^i|\eta^{\gamma-2}
+
C
\int_M
u^{-\frac{2}{n-4}}h^2\eta^{\gamma-2}
\nonumber\\
&+
CR^{-2}
\int_M
u^{-\frac{2}{n-4}}
|\nabla u|^2\eta^{\gamma-4}.
\tag{4.7}
\label{n>=7 1}
\end{align*}
Next, consider the differential identity
\begin{align*}
u^{\frac{2}{n-4}}
\left(
u^{-\frac{2}{n-4}}hu_i
\right)_{,}^{\ i}
=
h^2
+
\frac{h|\nabla u|^2}{u}
+
\frac{4}{n-4}uE
+
uF.
\end{align*}
Integration by parts gives
\begin{align*}
\int_M
u^{-\frac{2}{n-4}}h^2\eta^{\gamma-2}
\leqslant{}&
C
\int_M
u^{-\frac{2}{n-4}}
\left(
|E_iu^i|+|F_iu^i|
\right)\eta^{\gamma-2}
+
C
\int_M
u^{-\frac{2}{n-4}-1}
(-h)|\nabla u|^2\eta^{\gamma-2}
\nonumber\\
&+
C
\int_M
u^{-\frac{2}{n-4}}
(-h)|\nabla u||\nabla\eta|
\eta^{\gamma-3}.
\end{align*}
Again applying \eqref{eta} and Young's inequality,
\begin{align*}
\int_M
u^{-\frac{2}{n-4}}h^2\eta^{\gamma-2}
\leqslant{}&
C
\int_M
u^{-\frac{2}{n-4}}
\left(
|E_iu^i|+|F_iu^i|
\right)\eta^{\gamma-2}
+
C
\int_M
u^{-\frac{2}{n-4}-1}
(-h)|\nabla u|^2\eta^{\gamma-2}
\nonumber\\
&+
\frac12
\int_M
u^{-\frac{2}{n-4}}h^2\eta^{\gamma-2}
+
CR^{-2}
\int_M
u^{-\frac{2}{n-4}}
|\nabla u|^2\eta^{\gamma-4}.
\end{align*}
Hence
\begin{align*}
\int_M
u^{-\frac{2}{n-4}}h^2\eta^{\gamma-2}
\leqslant{}&
C
\int_M
u^{-\frac{2}{n-4}}
\left(
|E_iu^i|+|F_iu^i|
\right)\eta^{\gamma-2}
+
C
\int_M
u^{-\frac{2}{n-4}-1}
(-h)|\nabla u|^2\eta^{\gamma-2}
\nonumber\\
&+
CR^{-2}
\int_M
u^{-\frac{2}{n-4}}
|\nabla u|^2\eta^{\gamma-4}.
\tag{4.8}
\label{n>=7 2}
\end{align*}
Combining Proposition 2.2, \eqref{n>=7 1} and \eqref{n>=7 2}, we obtain
\begin{align*}
&\int_M
u^{-\frac{2}{n-4}}
\left[
h^2+\frac{|\nabla u|^4}{u^2}+(\Delta u)^2
\right]\eta^{\gamma-2}
\nonumber\\
&\leqslant{}
C
\int_M
u^{-\frac{2}{n-4}}
\left(
|E_iu^i|+|F_iu^i|
\right)\eta^{\gamma-2}
+
C
\int_M
u^{-\frac{2}{n-4}-1}
(-h)|\nabla u|^2\eta^{\gamma-2}
\nonumber\\
&
\quad
+
CR^{-2}
\int_M
u^{-\frac{2}{n-4}}
|\nabla u|^2\eta^{\gamma-4}.
\end{align*}
From \eqref{G 1},
\begin{equation*}
    -G
    \geqslant
    \frac1{c_n^2}\frac{\tilde h^2}{u}.
    \tag{4.9}
    \label{G 2}
\end{equation*}
Consequently, using Young's inequality,
\begin{align*}
&R^{-2}
\int_M
u^{-\frac{2}{n-4}}
\left[
h^2+\frac{|\nabla u|^4}{u^2}+(\Delta u)^2
\right]\eta^{\gamma-2}
\nonumber\\
&\leqslant{}
\varepsilon\Delta
+
CR^{-4}
\int_M
u^{-\frac{2}{n-4}}
|\nabla u|^2\eta^{\gamma-4}
+
CR^{-2}
\int_M
u^{-\frac{2}{n-4}-1}
(-\tilde h)|\nabla u|^2\eta^{\gamma-2}
\nonumber\\
&
\quad
+
CR^{-2}
\int_M
u^{\frac{2}{n-4}}
|\nabla u|^2\eta^{\gamma-2}.
\end{align*}
Using \eqref{G 2} and Young's inequality,
\begin{align*}
&R^{-2}
\int_M
u^{-\frac{2}{n-4}}
\left[
h^2+\frac{|\nabla u|^4}{u^2}+(\Delta u)^2
\right]\eta^{\gamma-2}
\nonumber\\
&\leqslant{}
\varepsilon\Delta
+
CR^{-4}
\int_M
u^{-\frac{2}{n-4}}
|\nabla u|^2\eta^{\gamma-4}
+
\frac{\varepsilon}{c_n^2}
\int_M
u^{-\frac{2}{n-4}-2}
\tilde h^2|\nabla u|^2\eta^\gamma
+
CR^{-2}
\int_M
u^{\frac{2}{n-4}}
|\nabla u|^2\eta^{\gamma-2}
\nonumber\\
&
\leqslant
2\varepsilon\Delta
+
CR^{-4}
\int_M
u^{-\frac{2}{n-4}}
|\nabla u|^2\eta^{\gamma-4}
+
CR^{-2}
\int_M
u^{\frac{2}{n-4}}
|\nabla u|^2\eta^{\gamma-2}
\nonumber\\
&
\leqslant{}
2\varepsilon\Delta
+
\frac12R^{-2}
\int_M
u^{-\frac{2}{n-4}}
\frac{|\nabla u|^4}{u^2}
\eta^{\gamma-2}
+
CR^{-6}
\int_M
u^{-\frac{2}{n-4}+2}\eta^{\gamma-6}
+
CR^{-2}
\int_M
u^{\frac{2n-2}{n-4}}\eta^{\gamma-2}.
\end{align*}
Thus
\begin{align*}
R^{-2}
\int_M
u^{-\frac{2}{n-4}}
(\Delta u)^2
\eta^{\gamma-2}
\leqslant{}&
4\varepsilon\Delta
+
CR^{-6}
\int_M
u^{-\frac{2}{n-4}+2}\eta^{\gamma-6}
+
CR^{-2}
\int_M
u^{\frac{2n-2}{n-4}}\eta^{\gamma-2}.
\end{align*}
Hence, choosing $\varepsilon\in\mathbb{R}_+$ sufficiently small and combining \eqref{Delta} give
\begin{align*}
\Delta
\leqslant{}&
C_0R^{-2}
\int_M
u^{-\frac{2}{n-4}}
(\Delta u)^2\eta^{\gamma-2}
\nonumber\\
\leqslant{}&
\frac12\Delta
+
CR^{-6}
\int_M
u^{-\frac{2}{n-4}+2}\eta^{\gamma-6}
+
CR^{-2}
\int_M
u^{\frac{2n-2}{n-4}}\eta^{\gamma-2}.
\end{align*}
Therefore,
\begin{equation*}
    \Delta
    \leqslant
    CR^{-6}
    \int_M
    u^{-\frac{2}{n-4}+2}\eta^{\gamma-6}
    +
    CR^{-2}
    \int_M
    u^{\frac{2n-2}{n-4}}\eta^{\gamma-2}.
    \tag{4.10}
    \label{n>=7 3}
\end{equation*}
The same estimate can also be obtained by using the argument for $n=6$ in the subsection 4.3.
Applying \eqref{vol} and Hölder's inequality,
\begin{align*}
\Delta
\leqslant{}&
CR^{-1}
\left(
\int_Mu^{\frac{2n}{n-4}}
\right)^{\frac{n-5}{n}}
+
CR^{-1}
\left(
\int_Mu^{\frac{2n}{n-4}}
\right)^{\frac{n-1}{n}}.
\end{align*}
Since $u\in \mathcal{L}^{\frac{2n}{n-4}}(M)$, letting $R\to+\infty$ gives $E_{ij}\equiv0$ and $F_i\equiv0$.  Furthermore, by the Caccioppoli trick, it suffices to assume that
\[
    \int_{B_R}
    u^{\frac{2n}{n-4}}
    =
    O\left(
    R^{\frac{n}{n-1}}
    \right),
\]
for some $p\in M$ and large $R$.

Moreover, when $n\geqslant7$, using \eqref{n+4/n-4},
\begin{align*}
    R^{-2}
    \int_M
    u^{\frac{2n-2}{n-4}}\eta^{\gamma-2}
    \leqslant{}&
    CR^{\frac n2-4}
    \|u\|_{\mathcal{L}^\infty(B_{2R})}^{\frac{n-6}{n-4}}.
\end{align*}
Thus using \eqref{n>=7 3} and the Caccioppoli trick, the pointwise growth or decay condition
\begin{equation*}
    \|u\|_{\mathcal{L}^\infty(B_R)}=O(R^{\mu_n}),
    \qquad
    \mu_n=\frac{(8-n)(n-4)}{2(n-6)},
    \tag{4.11}
    \label{growth 2}
\end{equation*}
for some $p\in M$ and large R, is also sufficient for the classification. In particular, from \eqref{growth 1} and \eqref{growth 2}, since $\mu_7=\frac32\in\mathbb{R}_+$ and $\mu_8=0$, we obtain the following corollary:
\begin{corollary}
Let $(M^n,g)$ be a complete, connected, non-compact Riemannian manifold,
$6\leqslant n\leqslant8$, satisfying $\mathrm{Ric}\geqslant 0$. Assume $f(u)=u^{\frac{n+4}{n-4}}$. Then there exists $u\in \mathcal{C}_b^\infty(M,\mathbb R_+)$ satisfying \eqref{eq} and \eqref{Delta u} if and only if $(M^n,g)$ is isometric to
$(\mathbb{R}^n,g_E)$. Moreover, $u$ is of the form
\begin{equation*}
    u(x)
    =
    \frac{
    \left[\lambda^2 n(n-4)(n^2-4)\right]^{\frac{n-4}{8}}
    }
    {\left(1+\lambda|x-x_0|^2\right)^{\frac{n-4}{2}}},
    \qquad
    \lambda\in\mathbb{R}_+,\quad x_0\in\mathbb{R}^n.
\end{equation*}
\end{corollary}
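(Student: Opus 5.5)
The plan is to treat $n=6$ and $n=7,8$ separately. In each case I would reduce to one of the sufficient conditions already established: the logarithmic growth \eqref{growth 1} when $n=6$, and the pointwise bound \eqref{growth 2} when $n\geqslant7$. Both then feed into the Caccioppoli-trick arguments of Subsections 4.3 and 4.4. The "if" direction is immediate, since on $(\mathbb{R}^n,g_E)$ the displayed bubble is bounded, smooth and positive, solves \eqref{eq}, and has $\Delta u\to0$ at infinity, so \eqref{Delta u} holds.

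For the "only if" direction, $u$ is bounded, say $0<u\leqslant K$, and satisfies \eqref{eq} and \eqref{Delta u}. The discussion after \eqref{Delta u} in Section 1 gives $\Delta u\leqslant0$, and Propositions 2.2 and 2.3 then apply. The full machinery of Section 3 is therefore available, together with \eqref{Delta}, \eqref{n=6} and \eqref{n>=7 3}.

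\emph{Case $n=6$.} Since $u\leqslant K$, we have $\sup_{B_R}\ln u\leqslant \ln K\leqslant CR$ for large $R$, so \eqref{growth 1} holds. One point needs care: $\ln u$ may be negative, but then $u^5\ln u\leqslant0$ and that term only helps in \eqref{n=6}. The chain of estimates in Subsection 4.3 then goes through verbatim. I would bound $2C_0R^{-2}\int u^5\ln u\,\eta^{\gamma-2}$ by $CR^{-2}(\ln K)_+\int_{B_R}u^5\leqslant CR^{-1}$ using \eqref{5}. The lower-order terms $R^{-6}\int(u^{1/2}+u^{3/2})$ are bounded using \eqref{vol} directly, since they are at most $CR^{-6}(K^{1/2}+K^{3/2})R^6$. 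This gives $\Delta\leqslant C$, and the Caccioppoli argument of Subsection 4.3 yields $E_{ij}\equiv0$ and $F_i\equiv0$.

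\emph{Case $n=7,8$.} Here $\mu_7=\tfrac32>0$ and $\mu_8=0$, so $\|u\|_{\mathcal L^\infty(B_R)}\leqslant K=O(R^{\mu_n})$, which is \eqref{growth 2}. I would verify the mechanism behind \eqref{growth 2} by checking both terms of \eqref{n>=7 3}.
\begin{itemize}
\item For the second term, \eqref{n+4/n-4} and $u^{\frac{2n-2}{n-4}}\leqslant K^{\frac{n-6}{n-4}}u^{\frac{n+4}{n-4}}$ give $R^{-2}\int u^{\frac{2n-2}{n-4}}\eta^{\gamma-2}\leqslant CR^{\frac n2-4}$. This is bounded for $n\leqslant8$.
\item For the first term, the exponent $2-\frac{2}{n-4}$ is positive, so $R^{-6}\int u^{2-\frac2{n-4}}\eta^{\gamma-6}\leqslant CR^{n-6}$ by \eqref{vol}. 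This is \emph{not} bounded for $n=7,8$, so a better estimate is needed.
\end{itemize}
To handle the first term, I would interpolate with \eqref{n+4/n-4}. For $n=7$, $u^{4/3}\leqslant K^{1/3}\,u$, and by Hölder $\int_{B_R}u\leqslant (\int u^{11/3})^{3/11}R^{7\cdot 8/11}\leqslant CR^{\frac32\cdot\frac3{11}+\frac{56}{11}}$. This still loses. A cleaner route is the chain used to prove \eqref{n+4/n-4}: the step $\int u^{\frac{n+4}{n-4}}\eta^{n+4}\leqslant CR^{-4}\int u\eta^n$, combined with Young's inequality, gives $\int_{B_R}u\leqslant CR^{\frac n2+2}$ as well. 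Using this, $R^{-6}\int u^{2-\frac2{n-4}}\leqslant CK^{1-\frac2{n-4}}R^{\frac n2-4}$, which is bounded for $n\leqslant8$.

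With $\Delta\leqslant C$ in hand, I would run the Caccioppoli trick exactly as in Subsection 4.3: define $I_R$ and $I$, split $\Delta\leqslant A(I-I_R)+CA^{-1}$, and let $R\to\infty$ and then $A\to\infty$. This gives $E_{ij}\equiv0$ and $F_i\equiv0$. The rigidity argument of Subsection 3.2 (Tashiro's theorem plus the ODE computation) then yields the isometry with $\mathbb{R}^n$ and the explicit form of $u$.

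The main obstacle is making sure every lower-order term is bounded using only $\sup u<\infty$ and the averaged bound $\int_{B_R}u\leqslant CR^{\frac n2+2}$, rather than by the crude volume growth. In particular, the exponent bookkeeping at $n=8$, where $\mu_8=0$, is borderline, and I expect that to be the delicate point.
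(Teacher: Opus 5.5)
Your proposal is correct and is essentially the paper's argument: boundedness of $u$ gives (4.6) for $n=6$ and (4.11) for $n=7,8$ (since $\mu_7=\frac32>0$ and $\mu_8=0$), and the Caccioppoli trick together with the rigidity argument of Subsection 3.2 completes the proof. Your extra check of the first term of (4.10), which the paper passes over, is sound; note only that your H\"older computation for $n=7$ already gives $R^{\frac{121}{22}}=R^{\frac n2+2}$, so it does not lose, and in fact H\"older with (4.4) and (4.2) bounds $R^{-6}\int_{B_{2R}}u^{2-\frac{2}{n-4}}$ by $CR^{-1}$ for $7\leqslant n\leqslant 14$ without using $\sup u<\infty$.
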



\begin{thebibliography}{99}\small
    \bibitem{CGS89} L. A. Caffarelli, B. Gidas, J. Spruck, Asymptotic symmetry and local behavior of semilinear elliptic equations with critical Sobolev growth, Comm. Pure Appl. Math. 42 (1989), 271--297.
    
    \bibitem{Cas24} J. Case, The Obata--V\'etois argument and its applications, J. Reine Angew. Math. (Crelles Journal) 815 (2024), 23--40.

    \bibitem{CM22} G. Catino, D. D. Monticelli, Semilinear elliptic equations on manifolds with nonnegative ricci curvature, arXiv: 2203.03345, 33 pp.
    
    \bibitem{CMR23} G. Catino, D. D. Monticelli, A. Roncoroni, On the critical \(p\)-Laplace equation, Adv. Math. 433 (2023), 38 pp.
    
    \bibitem{CL91} W. Chen, C. Li, Classification of solutions of some nonlinear elliptic equations, Duke Math. J. 63 (1991) 615--622.

    \bibitem{FWX15} M. Fazly, J. Wei and X. Xu, A pointwise inequality for the fourth-order Lane-Emden equation, Anal. PDE 8 (2015), 1541--1563.

    \bibitem{FMM23} M. Fogagnolo, A. Malchiodi, L. Mazzieri, A note on the critical Laplace equation and Ricci curvature, J. Geom. Anal. 33 (2023), 17 pp.
    
    \bibitem{GS81} B. Gidas, J. Spruck, Global and local behavior of positive solutions of nonlinear elliptic equations, Commun. Pure Appl. Math. 34 (1981), 525--598.
    
    \bibitem{LW25} M. Li, J. Wei, A remark on the Case-Gursky-V\'etois identity and its applications, Proc. Am. Math. Soc. 153 (2025), 3417--3430.
    
    \bibitem{Lin98} C. Lin, A classification of solutions of a conformally invariant fourth order equation in \(\mathbb{R}^n\), Comment. Math. Helv. 73 (1998), 206--231.
    
    \bibitem{MOW25} X. Ma, Q. Ou, T. Wu, Jerison-Lee identities and semi-linear subelliptic equations on Heisenberg group, Acta Math. Sci. 45 (2025), 264--279.
    
    \bibitem{MW24} X. Ma, T. Wu, The application of the invariant tensor technique in the classification of solutions to semilinear elliptic and sub-elliptic partial differential equations (in Chinese), Sci. Sin. Math. 54 (2024), 1627--1648.

    \bibitem{MWW25} X. Ma, T. Wu, W. Wu, Liouville theorem of the subcritical biharmonic equation on complete manifolds, arXiv:2508.14497, 13 pp.
    
    \bibitem{MWZ25a} X. Ma, T. Wu, X. Zhou, The Liouville-type equation and an Onofri-type inequality on closed 4-manifolds, arXiv:2508.14494, 17 pp.
    
    \bibitem{MWZ25b} X. Ma, T. Wu, X. Zhou, The subcritical biharmonic equation and Sobolev embedding nequality on closed 4-manifolds, preprint, (2025).
    
    \bibitem{NNP18} Q.~A. Ng\^o, V.~H. Nguyen and Q.~H. Phan, A pointwise inequality for a biharmonic equation with negative exponent and related problems, Nonlinearity 31 (2018), 5484--5499.

    \bibitem{Ou25} Q. Ou, On the classification of entire solutions to the critical \(p\)-Laplace equation, Math. Ann. 392 (2025), 1711--1729.

    \bibitem{SW25} L. Sun, Y. Wang, Critical quasilinear equations on Riemannian manifolds, arXiv: 2502.08495, 46 pp.

    \bibitem{Tas65} Y. Tashiro, Complete Riemannian manifolds and some vector fields, Trans. Amer. Math. Soc. 117 (1965), 251--275.
    
    \bibitem{Vet24} J. V\'etois, Uniqueness of conformal metrics with constant Q-curvature on closed Einstein manifolds, Potential Anal. 61 (2024), 485--500.
\end{thebibliography}
\end{document}